\documentclass[11pt, a4paper]{article}
\usepackage[utf8]{inputenc}
\usepackage[T1]{fontenc}
\usepackage[american]{babel}
\usepackage{amsmath}
\usepackage{amsthm}
\usepackage{dsfont}
\usepackage{amssymb}
\usepackage{mathtools}
\usepackage{graphicx}
\usepackage[shortlabels]{enumitem}
\usepackage[colorinlistoftodos,color=red!50!blue!50]{todonotes}
\usepackage{booktabs}
\usepackage{subcaption}
\usepackage{siunitx}
\usepackage[lined,ruled,noend,linesnumbered]{algorithm2e}
\usepackage[hypertexnames=false,breaklinks,colorlinks=true,citecolor=blue,linkcolor=blue,urlcolor=blue!50!black]{hyperref}
\usepackage[tmargin=2.5cm,bmargin=2.5cm,lmargin=2.8cm,rmargin=2.8cm]{geometry}
\usepackage{eurosym}
\usepackage{xspace}

\allowdisplaybreaks
\newcommand{\suchthat}{\,:\,}
\newcommand{\abs}[1]{\lvert{#1}\rvert}
\newcommand{\card}[1]{\lvert{#1}\rvert}

\newcommand{\define}{\coloneqq}

\newcommand{\dcup}{\mathbin{\dot{\cup}}}

\DeclareSIUnit\eurocur{\mbox{\footnotesize\euro{}}}
\DeclareSIUnit\bar{bar} 

\newcommand{\graph}{{G}}
\newcommand{\nodes}{V}
\newcommand{\edges}{A}
\newcommand{\gsup}{{G^S}}
\newcommand{\gret}{{G^R}}
\newcommand{\nsup}{{V^{S}}}
\newcommand{\nret}{{V^{R}}}
\newcommand{\esup}{{A^{S}}}
\newcommand{\eret}{A^{R}}
\newcommand{\epipe}{{A^{P}}}
\newcommand{\edemand}{{A^{D}}}
\newcommand{\esupplier}{{A^{H}}}
\newcommand{\estorage}{{A^{C}}}

\newcommand{\pr}[1][i]{\ensuremath{{p}_{#1}}}
\newcommand{\pfix}{\ensuremath{\overline{p}}}
\newcommand{\mf}[1][ij]{\ensuremath{\dot{m}_{#1}}}
\newcommand{\mfp}[1][ij]{\ensuremath{\dot{m}_{#1}^+}}
\newcommand{\mfn}[1][ij]{\ensuremath{\dot{m}_{#1}^-}}
\newcommand{\mfpos}[1][ij]{\ensuremath{{b}^{+}_{#1}}}
\newcommand{\mfneg}[1][ij]{\ensuremath{{b}^{-}_{#1}}}
\newcommand{\mfposstar}[1][ij]{\ensuremath{{b}^{+,\star}_{#1}}}
\newcommand{\mfnegstar}[1][ij]{\ensuremath{{b}^{-,\star}_{#1}}}
\newcommand{\mfposprime}[1][ij]{\ensuremath{{b}^{+\prime}_{#1}}}
\newcommand{\mfnegprime}[1][ij]{\ensuremath{{b}^{-\prime}_{#1}}}
\newcommand{\heatpower}[1][i]{\ensuremath{\dot{Q}_{#1}}}
\newcommand{\pumppower}[1][i]{\ensuremath{P_{#1}}}
\newcommand{\stpower}[1][i]{\ensuremath{\dot{S}_{#1}}}
\newcommand{\tstart}[1][ij]{\ensuremath{{T^{+}_{#1}}}}
\newcommand{\tstartstar}[1][ij]{\ensuremath{{T^{+,\star}_{#1}}}}
\newcommand{\tstartprime}[1][ij]{\ensuremath{{T^{+\prime}_{#1}}}}
\newcommand{\tstartprimeprime}[1][ij]{\ensuremath{{T^{+\prime\prime}_{#1}}}}
\newcommand{\tend}[1][ij]{\ensuremath{T^{-}_{#1}}}
\newcommand{\tendstar}[1][ij]{\ensuremath{T^{-,\star}_{#1}}}
\newcommand{\tendprime}[1][ij]{\ensuremath{T^{-\prime}_{#1}}}
\newcommand{\tendprimeprime}[1][ij]{\ensuremath{T^{-\prime\prime}_{#1}}}
\newcommand{\tnode}[1][i]{\ensuremath{{T}_{#1}}}

\newcommand{\length}[1][ij]{\ensuremath{{\ell}_{#1}}}
\newcommand{\diam}[1][ij]{\ensuremath{{d}_{#1}}}
\newcommand{\height}[1][i]{\ensuremath{{h}_{#1}}}
\newcommand{\pipefriction}[1][ij]{\ensuremath{{\lambda_{#1}}}}
\newcommand{\mubound}[1][ij]{\ensuremath{{\overline{m}_{#1}}}} 
\newcommand{\tamb}{\ensuremath{\tau}}
\newcommand{\heatloss}[1][ij]{\ensuremath{\mu_{#1}}}
\newcommand{\heatcap}{\ensuremath{{c_p}}}
\newcommand{\pumpeff}[1][i]{\ensuremath{{\eta_{#1}}}}
\newcommand{\heatdemand}[1][i]{\ensuremath{\dot{Q}^{D}_{#1}}}
\newcommand{\tfix}{\ensuremath{\underline{T}}}
\newcommand{\ploss}{\Delta {p}}
\newcommand{\plossmin}{\ensuremath{\underline{\ploss}}}
\newcommand{\tubound}[1]{\ensuremath{\overline{T}_{#1}}}
\newcommand{\powerbound}[1][i]{\ensuremath{{\underline{\dot{Q}}_{#1}}}}
\newcommand{\stcap}[1]{\overline{S}_{#1}}
\newcommand{\heatcost}[1][i]{\ensuremath{\dot{c}_{#1}}}
\newcommand{\eleccost}[1][i]{\ensuremath{c_{#1}}}
\newcommand{\roughness}[1][ij]{\ensuremath{{\epsilon}_{#1}}}

\newcommand{\degreeCelsius}{\si{\celsius}}
\newcommand{\kgs}{\si[per-mode=fraction]{\kg\per\second}}
\newcommand{\Pascal}{\si{\pascal}}
\newcommand{\kW}{\si{\kilo\watt}}
\newcommand{\meter}{\si{\meter}}
\newcommand{\kwmc}{\si[per-mode=fraction]{\kilo\watt\per\meter\per\celsius}}
\newcommand{\kgm}{\si[per-mode=fraction]{\kilogram\per\cubic\meter}}
\newcommand{\mss}{\si[per-mode=fraction]{\meter\per\second\squared}}
\newcommand{\fracmkg}{\si[per-mode=fraction]{\per\meter\per\kilogram}}
\newcommand{\kjkgc}{\si[per-mode=fraction]{\kilo\joule\per\kilogram\per\celsius}}
\newcommand{\euromwh}{\si[per-mode=fraction]{\eurocur\per\mega\watt\per\hour}}

\newtheorem{theorem}{Theorem}
\newtheorem{lemma}[theorem]{Lemma}
\newtheorem{corollary}[theorem]{Corollary}

\newtheorem{remark}[theorem]{Remark}
\newtheorem{assumption}[theorem]{Assumption}

\setlist[itemize]{leftmargin=3ex,topsep=0.5ex,partopsep=0ex,parsep=0ex,itemsep=0.5ex}

\title{Global Optimization of Flexible District Heating Networks}
\author{Marc E. Pfetsch\thanks{Department of Mathematics, TU Darmstadt,
    Germany} \and Lea Rehlich \and Florian Steinke\thanks{Energy Information Networks \&
Systems Lab, TU Darmstadt, Germany} \and Stefan Ulbrich$^*$}
\date{July 2025}

\begin{document}

\maketitle

\begin{abstract}
  \noindent
  District heating networks are a central tool to achieve low-carbon heat supplies.
  In this realm, they face the challenge of dealing with increasingly heterogeneous, partially time-varying renewable sources, thermal storage, and meshed topologies.
  This paper examines global optimization of the operation of such district heating networks over multiple time steps, based on a stationary, yet realistical nonlinear network model.

  To accelerate the solution performance of a spatial branch-and-bound algorithm for one time step, the following new methodological ingredients are introduced:
  exclusion of cyclic flow, symmetry exploitation between supply and return networks, reduction of temperature mixing constraints, novel primal heuristics and branching rules.
  The proposed methods are evaluated on a set of generated and real-world benchmark network instances with cycles and several suppliers.
  On the generated benchmark instances, using these methods more than doubles the number of solved instances and more than halves the runtime.
  For the real-world benchmark instances, the resulting algorithm produces solutions with guaranteed quality in reasonable run time.

  For multiple time steps that are coupled by a storage, a time decomposition approach is investigated.
  Under assumptions that are reasonable in practice, this approach is shown to yield an optimal solution.
  On a small example network, this decomposition is able to compute optimal solutions in less than a second, while solving the complete time-coupled problem is not possible within one hour.
\end{abstract}

\section{Introduction}

The decarbonization of energy systems in order to limit climate change foresees in many regions a central role for district heating networks (DHNs), e.g., for the European Union \cite{renewable_heating_cooling_pathways_2023}.
These DHNs will increasingly integrate renewable heat sources, such as geothermal energy, waste heat from industrial processes, or time-variable renewable electricity from wind and sun converted into heat via heat pumps.
The multitude of different and partly fluctuating heat sources motivates the integration of additional thermal energy storage units into DHNs.
Also, the supply temperatures in DHNs are expected to decrease in order to reduce distribution losses and to allow for better efficiency of heat pumps. 
Increased flexibility demands additionally favor grid topologies beyond traditional, radial structures, allowing for multiple flow paths and changing flow directions in the network.
Such novel, complex DHNs systems are often referred to as 4th generation DHNs \cite{LUND20141}.
Their optimal operation is central for cost-efficient and environmentally friendly future heat supplies and is addressed in this paper. 


Various prior works have addressed the optimal operation of DHNs, see \cite{li2017district,TalebiMirzaeiBastaniHaghighat2016} for reviews.
Some approaches consider dynamic flow models of DHNs, e.g., \cite{nodhn,Dnschel2022AdaptiveNO,HeringCansevTamassiaXhonneuxMueller2021,HeringFallerXhonneauxMueller2022,JaekleReichleVolkwein}, which are able to capture transient effects in the network, but are computationally very challenging, since they require to discretize both in space and time.
In contrast, stationary DHN models assume steady-state conditions in the network at each of a few considered time steps and focus on the nodal values of pressure, temperature, and mass flow rate, which are the relevant physical quantities in DHNs.
Such models are sufficiently accurate for not too large networks and sufficiently long operational planning time steps \cite{bott2023deep}.
They render the scheduling problem much more tractable. However, the corresponding optimization problems are still non-convex and challenging.
To further reduce problem size and complexity, aggregation methods combine multiple consumers or suppliers into single equivalent nodes \cite{Loewen_2001, Larsen_2004} and multiple pipe segments into single equivalent pipes \cite{BOTT21}.
The resulting optimization problems have then often been addressed via linearization and mixed-integer linear programming (MILP), see, e.g., \cite{soderman2007optimisation}, via local search methods \cite{savola2007minlp} or via heuristics \cite{yiqing2007improved}.
Finding global optima of the exact, nonlinear system has not been examined widely.
If so, very small instances have been considered only, e.g., in the expansion context \cite{dhnExpansion}.

This paper presents a global optimization approach for the operation of DHNs with diverse heat sources, thermal energy storage units, and flexible flow directions.
It is based on a stationary, but realistical nonlinear DHN model.
Five new methodological ingredients are developed to tackle the arising challenging mixed-integer nonlinear programming (MINLP) problems:
(i) exclusion of cyclic flow,
(ii) the exploitation of symmetries in the supply and return network,
(iii) a reduction of temperature mixing constraints,
(iv) two primal heuristics, and
(iv) branching rules that fix the point in the network where the smallest pressure difference between supply and return part is attained.
For a single time step, the developed methods are evaluated on a set of benchmark instances including both generated as well as real-world inspired DHN topologies.
Moreover, for settings with multiple time steps that are coupled through a storage, a decomposition algorithm is developed.
The optimality of this approach is proved under certain assumptions.
While these assumptions are not fully general, they still cover important practically relevant use cases.

For a single time step and the test set of generated instances, the proposed algorithm improvements allow to more than double the number of solved instances and more than halve the solution time compared to the baseline spatial branch-and-bound implementation.
For the real-world instances, it allows to compute solutions with a guaranteed quality within reasonable time. 
Moreover, for a plausible four time step scheduling instance including a storage, the decomposition approach enables solvability in the first place, and that even in execution times below a second,
while the time-coupled problem cannot be solved within one hour.
Unlike MILP-based or heuristic approaches, this MINLP approach additionally provides accurate valid optimality guarantees, which is important in practical operational planning for solution stability and predictability.

This paper is structured as follows. Section \ref{sec:Modeling} presents the mathematical model for scheduling stationary DHNs, first for a single time step.
Properties of the resulting non-convex MINLP are discussed in Section \ref{sec:Properties}, including cycle flow and supply/return network symmetry.
Optimization problems with one time step are then experimentally evaluated in Section \ref{sec:NumericalResultsI}.
In Section \ref{sec:Storages}, we extend the model to multiple time steps and thermal energy storage units.
The novel decomposition approach to solve the corresponding optimization problems is presented and evaluated in Section \ref{sec:StorageOpt}.
Finally, Section \ref{sec:Conclusion} concludes the paper and gives an outlook on future research directions.

\section{Mathematical Modeling of Stationary Heating Networks}
\label{sec:Modeling}

In this section we present a mathematical optimization model that allows to
optimize the stationary operation of heating networks. The model is similar
to the one of K\"ocher \cite{koecher}, but there are multiple articles with similar models, for
example \cite{nodhn, borsche, en12071215, DUQUETTE2016383, LIU20161238}.

The main components of a district heating network are suppliers, consumers,
and pipes. Suppliers heat the water that flows through the pipes of the
network and control the pressure via pumps. Thus, they control the injected
mass flow rate of the water and its temperature. The consumers require an
amount of heat that has to be satisfied. The following stationary
mathematical model represents this behavior.

\subsection{Heating Networks}

We represent a district heating network as a (weakly) connected directed
graph $\graph = (\nodes, \edges)$ with nodes $\nodes$ and arcs
$\edges \subset V \times V$. We can assume that $\graph$ is simple, i.e.,
there are neither self-loops nor (anti)parallel arcs.

\begin{table}[bt]
  \caption{Model sets.}
  \label{tab:sets}
  \centering
  \begin{tabular}{@{}cl@{}}
    \toprule
    Set & Description \\
    \midrule
    $\graph = (\nodes, \edges)$ & network graph with nodes $\nodes$ and arcs $\edges \subset \nodes \times \nodes$\\
    $\epipe \subset \edges$ & pipe arcs \\
    $\edemand \subset \edges$ & demand arcs representing consumers \\
    $\esupplier \subset \edges$ & heating arcs representing suppliers \\
    $\nsup \subset \nodes$ & nodes in the supply part of the network \\
    $\nret \subset \nodes$ & nodes in the return part of the network \\
    $\esup \subset \epipe$ & pipe arcs in the supply part of the network \\
    $\eret \subset \epipe$ & pipe arcs in the return part of the network \\
    $\gsup = (\nsup, \esup)$ & supply network graph \\
    $\gret = (\nret, \eret)$ & return network graph \\
    \bottomrule
  \end{tabular}
\end{table}

A heating network consists of the supply and return
part. Accordingly, the set of nodes~$\nodes$ is split into \emph{supply
  nodes} $\nsup$ and \emph{return nodes} $\nret$. In each part, the nodes
are connected by \emph{supply} and \emph{return pipe arcs} $\esup$ and
$\eret$, respectively. The two parts are connected by \emph{heating arcs}
$\esupplier$ and \emph{demand arcs} $\edemand$, representing suppliers and
consumers, respectively. Hot water flows from the suppliers to the
consumers through pipes in $\esup$. The consumers at demand arcs $\edemand$
extract energy, cooling down the water, which is then inserted into the
return part~$\eret$, where it flows back to the suppliers on the heating
arcs $\esupplier$. There, the water taken from the return part, is heated
and reinserted into the supply part. Thus, the graph
$\graph = (\nodes, \edges)$ consists of nodes $\nodes = \nsup \dcup \nret$
and arcs $\edges = \esup \dcup \eret \dcup \esupplier \dcup \edemand$.
The \emph{pipe arcs} $\epipe \define \esup \dcup \eret$ yield
two disjoint graphs, the \emph{supply network graph}
$\gsup = (\nsup, \esup)$ and the \emph{return network graph}
$\gret = (\nret, \eret)$. The sets of the network model are summarized in
Table \ref{tab:sets}. Figure \ref{fig:modell1} shows a small example
network with $\abs{\nodes} = 6$ nodes and $\abs{\edges} = 7$ arcs.

We assume that heating arcs are always directed from the return to the
supply network, while demand arcs are always directed from the supply to
the return network, i.e., $\esupplier \subset \nret \times \nsup$ and
$\edemand \subset \nsup \times \nret$. Moreover, for simplicity, we assume
that every node is connected either to a heating arc $\esupplier$ or a
demand arc $\edemand$. In reality, this may not hold, but then either multiple
pipes could be combined into one, see, e.g.,~\cite{BOTT21}, or demand arcs
with a demand of 0 can be added.

\begin{figure}[tb]
  \centering
  \begin{tikzpicture}
    \node at (0,3) [circle,draw, very thick] (va){$v^S_1$};
    \node[very thick] at (4,3) [circle,draw] (vb){$v^S_2$};
    \node at (8,3) [circle,draw, very thick] (vc){$v^S_3$};
    \node at (0,0) [circle,draw, very thick] (ra){$v^R_1$};
    \node at (4,0) [circle,draw, very thick] (rb){$v^R_2$};
    \node at (8,0) [circle,draw, very thick] (rc){$v^R_3$};
    \draw[->, very thick] (ra)-- node[left]{$a^{H}_{1}$}(va);
    \draw[->, very thick] (va)-- node[above]{$a^{S}_1$}(vb);
    \draw[->, very thick] (vb)-- node[above]{$a^{S}_2$}(vc);
    \draw[->, very thick] (vc)-- node[left]{$a^{D}_{3}$}(rc);
    \draw[->, very thick] (rc)-- node[above]{$a^{R}_2$}(rb);
    \draw[->, very thick] (rb)-- node[above]{$a^{R}_1$}(ra);
    \draw[->, very thick] (vb)-- node[left]{$a^{D}_{2}$}(rb);
  \end{tikzpicture}
  \caption{Small example network with $\nsup = \{v^S_1, v^S_2, v^S_3\}$ and
    $\esup = \{a^{S}_1, a^{S}_2\}$ as well as
    $\nret = \{v^R_1, v^R_2, v^R_3\}$ and $\eret = \{a^{R}_1, a^{R}_2\}$,
    which specify the supply and return part, respectively. Moreover,
    $a^{H}_1$ is a heating arc, while $a^{D}_{2}$, $a^{D}_{3}$ are demand
    arcs.}
  \label{fig:modell1}
\end{figure}

Important state variables are the mass flow rate $\mf[a]$ in \kgs\ for an
arc $a \in \edges$ and the pressure $\pr[v]$ in \Pascal\ at a node
$v \in \nodes$. Moreover, we have the temperature $\tstart[a]$ and
$\tend[a]$ in \degreeCelsius\ at the start and end of arc~$a$ (in flow direction),
respectively, as well as the temperature $\tnode[v]$ in~$\degreeCelsius$ at
a node $v$. Table~\ref{tab:vars} gives a list of the used variables and
Table~\ref{tab:parameters} the used parameters. In the first part of this
paper, we consider the stationary case such that all variables are constant
over time.

\begin{table}[tb]
  \caption{Model variables.}
  \label{tab:vars}
  \centering
  \begin{tabular}{@{}cll@{}} \toprule 
    Variable & Description & Unit \\
    \midrule
    \mf[a] & mass flow rate on arc $a \in \edges$ & \kgs \\
    \pr[v] & pressure at node $v \in \nodes$ & \Pascal \\
    \tnode[v] & temperature at node $v \in \nodes$ & \degreeCelsius \\
    \tstart[a] & start temperature of the flow on arc $a \in \edges$ (in flow direction) & \degreeCelsius \\
    \tend[a] & end temperature of the flow on arc $a \in \edges$  (in flow direction) & \degreeCelsius \\
    \mfpos[a] & binary positive flow direction variable on arc $a \in \edges$ & -- \\
    \mfneg[a] & binary negative flow direction variable on arc $a \in \edges$ & -- \\
    \heatpower[a] & heat energy of the heating arc $a \in \esupplier$ & \kW \\
    \pumppower[a] & electric power of the heating arc $a \in \esupplier$ & \kW \\
    \bottomrule
  \end{tabular}
\end{table} 

\begin{table}[tb]
  \caption{Model parameters.}
  \label{tab:parameters}
  \centering
  \begin{tabular}{@{}cllcc@{}} 
    \toprule
    Parameter & Description & Indices & Unit & Value \\
    \midrule
    $\zeta_a$ & pipe dependent parameters & $a \in \epipe$ & \fracmkg & \\
    \length[a] & length of a pipe & $a \in \epipe$ & \meter & \\
    \diam[a] & diameter of a pipe & $a \in \epipe$ & \meter & \\
    \heatloss[a] & heat loss in a pipe & $a \in \epipe$ & \kwmc & \\
    \pipefriction[a] & friction-coefficient of a pipe & $a \in \epipe$ & -- & \\
    \roughness[a] & roughness of a pipe & $a \in \epipe$ & \meter & \\ %
    \tubound{a} & upper temperature bound at end of arc & $a \in \epipe$ & \degreeCelsius \\
    \powerbound[a] & lower energy bound & $a \in \esupplier$ & \kW \\
    \pumpeff[a] & pump efficiency & $a \in \esupplier$ & -- & \\
    \heatdemand[a] & given heat demand & $a \in \edemand$ & \kW & \\
    \plossmin & minimal pressure drop & $a \in \edemand$ & \Pascal & $10^4$ \\
    \height[v] & height of node & $v \in \nodes$ & \meter & \\
    \heatcap & specific heat capacity of water & -- & \kjkgc & $4.184$ \\
    \tamb & ambient temperature & -- & \degreeCelsius & $14$ \\
    $\rho$ & density of water & -- & \kgm & 997 \\
    $g$ & gravitational constant & -- & \mss & 9.81 \\
    \pfix & fixed pressure at one node & -- & \Pascal & $10^6$  \\
    \tfix & fixed temperature at end of demand arcs & -- & \degreeCelsius & $40$ \\
    \bottomrule
  \end{tabular}
\end{table}

\subsection{Hydraulic Variables and Constraints}
\label{sec:HydraulicModel}

Every arc $a \in \edges$ has a corresponding flow rate $\mf[a]$. A positive
value indicates flow along the direction of~$a$, while negative values
indicate flow against its direction. For heating and demand arcs
$a \in \esupplier \cup \edemand$, we always assume that $\mf[a] \geq 0$.

\paragraph{Hydraulic Behavior on Pipes} On pipes, we use the following
model providing an approximation of the hydraulic behavior on a pipe
$a = (u,v) \in \epipe$ (see, e.g., \cite{koecher}):
\[
  \pr[u] - \pr[v] = \zeta_a \, \mf[a]\, \abs{\mf[a]} - \rho\, g\, (\height[u] -
  \height[v]),
\]
where $\rho$ is the density of water in \kgm, $g$ the gravitational constant in \mss, and
$\height[v]$ is the \emph{height} in \meter\ of a node $v \in \nodes$. Moreover,
\begin{align*}
  \zeta_a \define \pipefriction[a] \frac{\length[a]}{\diam[a]^5} \frac{8}{\pi^2\, \rho}
\end{align*}
in \fracmkg depends on the following pipe parameters:
$\pipefriction[a]$ is the dimensionless \emph{friction coefficient},
$\length[a]$ and~$\diam[a]$ are the \emph{length} and
\emph{diameter} of pipe $a$ in \meter, respectively. The above
constraint follows from the stationary momentum equation and the
incompressibility of water within a pipe. For a detailed derivation of
this constraint from the nonlinear Euler equations we refer to, e.g.,
Roland and Schmidt~\cite{dhnExpansion}.

To model the friction coefficient, which represents the pressure loss in a
pipe due to friction, we use the formula of
Nikuradse~\cite{Nikuradse1933,Nikuradse1950}
\begin{equation}\label{eq:Nikuradse}
  \pipefriction[a] = \left(2 \log_{10}\left(\frac{ \diam[a]
  }{\roughness[a]}\right) + 1.138\right)^{-2}.
\end{equation}
This formula depends on the roughness $\roughness[a]$ of the pipe~$a$ and
the diameter $\diam[a]$, but it is independent of the mass flow rate.

An alternative choice for modeling the friction factor would be to use
the Colebrook-White formula~\cite{colebrook}, where the friction is
the solution of a nonlinear equation. Because of its increased
complexity with limited additional accuracy, in this paper we
use~\eqref{eq:Nikuradse}. See Kazda and Li~\cite{KazdaLi} for a
further comparison of these two friction factors.

\paragraph{Flow Directions}
We focus on networks in which the direction of the flows is not fixed a
priori. Figure \ref{fig:example_bid} shows two example networks, where the
mass flow direction is not uniquely defined. One network has two heating
arcs $a^H_1$ and $a^H_4$, while the other contains a cycle. Arcs in solid blue
have flow direction that is fixed by definition, while the dashed black
arcs do not.

\begin{figure}[tb]
  \centering
  \begin{minipage}[t]{.49\linewidth}
    \centering
    \begin{tikzpicture}[scale=0.7]
      \node at (0,3) [circle,draw, very thick] (va){$v^S_1$};
      \node at (2.5,3) [circle,draw, very thick] (vb){$v^S_2$};
      \node at (5,3) [circle,draw, very thick] (vc){$v^S_3$};
      \node at (7.5,3) [circle,draw, very thick] (vd){$v^S_4$};
      \node at (0,0) [circle,draw, very thick] (ra){$v^R_1$};
      \node at (2.5,0) [circle,draw, very thick] (rb){$v^R_2$};
      \node at (5,0) [circle,draw, very thick] (rc){$v^R_3$};
      \node at (7.5,0) [circle,draw, very thick] (rd){$v^R_4$};
      \draw[->, color=blue, very thick] (ra)-- node[left]{$a^{H}_{1}$}(va);
      \draw[->, color=blue, very thick] (rd)-- node[left]{$a^{H}_{4}$}(vd);
      \draw[<->, dashed, very thick] (va) -- (vb);
      \draw[<->, dashed, very thick] (vb) -- (vc);
      \draw[->, color=blue, very thick] (vc)-- node[left]{$a^{D}_{3}$}(rc);
      \draw[<->, dashed, very thick] (rc) -- (rb);
      \draw[<->, dashed, very thick] (rb) -- (ra);
      \draw[->, color=blue, very thick] (vb)-- node[left]{$a^{D}_{2}$}(rb);
      \draw[<->, dashed, very thick] (vd) -- (vc);
      \draw[<->, dashed, very thick] (rc) -- (rd);
    \end{tikzpicture}
    \subcaption{Example network with two suppliers.}
    \label{fig:regina}
  \end{minipage}
  \hfill%
  \begin{minipage}[t]{.49\linewidth}
    \centering
    \begin{tikzpicture}[scale=0.7]
      \node at (0,3) [circle,draw, very thick] (va){$v^S_1$};
      \node at (2.5,3) [circle,draw, very thick] (vb){$v^S_2$};
      \node at (5,3) [circle,draw, very thick] (vc){$v^S_3$};
      \node at (0,0) [circle,draw, very thick] (ra){$v^R_1$};
      \node at (2.5,0) [circle,draw, very thick] (rb){$v^R_2$};
      \node at (5,0) [circle,draw, very thick] (rc){$v^R_3$};
      \draw[->, color=blue, very thick] (va)-- node[left]{$a^{D}_{1}$}(ra);
      \draw[->, color=blue, very thick] (rb)-- node[left]{$a^{H}_{2}$}(vb);
      \draw[->, color=blue, very thick] (vc)-- node[left]{$a^{D}_{3}$}(rc);
      \draw[<->, dashed, very thick] (va) -- (vb);
      \draw[<->, dashed, very thick] (vb) -- (vc);
      \draw[<->, dashed, very thick] (rb) -- (ra);
      \draw[<->, dashed, very thick] (rc) -- (rb);
      \draw[<->, dashed, very thick] (rc) -- (6.25,0) -- (6.25,-1) -- (-1.25,-1) -- (-1.25,0) -- (ra);
      \draw[<->, dashed, very thick] (vc) -- (7,3) -- (7,-1.5) -- (-2,-1.5) -- (-2,3) -- (va);
    \end{tikzpicture}
    \subcaption{Example cyclic network.}
    \label{fig:circulus}
  \end{minipage}
  \caption{Example networks with some unknown mass flow directions (solid blue).}
  \label{fig:example_bid}
\end{figure}

To model physical properties like temperature mixing, we need to identify
the flow direction of the pipe arcs. Thus, we introduce the binary
\emph{flow direction variables} $\mfpos[a]$, $\mfneg[a] \in \{0,1\}$ with
$\mfpos[a] + \mfneg[a] = 1$ for all pipe arcs $a \in \epipe$. The mass flow
rate $\mf[a]$ on a pipe arc $a$ is bounded from above by a maximal mass
flow rate $\mubound[a]$ due to the properties of the underlying pipe
(material, diameter, etc.). The coupling of the flow direction variables
with the mass flow is as follows:
\begin{align*}
  -\mubound[a] \, \mfneg[a] \leq \mf[a] \leq \mubound[a] \, \mfpos[a] \quad
  \forall\, a \in \epipe.
\end{align*}
Maximal mass flow rates of standard pipe types used in district heating
networks can be found in~\cite{Isoplus_starre_Verbundsysteme}.
Note that one can also use the binary variables to model the absolute value
of the flow as $\abs{\mf[a]} = \mf[a] \, \mfpos[a] - \mf[a] \,\mfneg[a]$.

\paragraph{Flow Conservation}
At each node of the network, mass flow has to be conserved, i.e., the water
circulates without loss through the network. These \emph{flow conservation
  constraints} can be expressed as:
\begin{align*}
  \sum_{a \in \delta^{+}(v)} \mf[a] - \sum_{a \in \delta^-(v)} \mf[a] =
  0\quad\forall\, v \in \nodes,
\end{align*}
where $\delta^+(v) \define \{a = (v,w) \in \edges\}$ are the arcs leaving
$v$ and $\delta^-(v) \define \{a = (u,v) \in \edges\}$ are the arcs
entering $v$.

\subsection{Thermal Variables and Constraints}
\label{sec:TemperatureModel}

Based on the stationary thermal energy equation, see, e.g., Roland and
Schmidt~\cite{dhnExpansion} and K\"ocher~\cite{koecher}, and taking heat
losses into account, the temperature change along a pipe $a \in \epipe$ can
be described as
\begin{align}
  \tend[a] &= (\tstart[a] - \tamb) \, \exp{ \left( -\frac{\heatloss[a]}{\heatcap}\frac{\length[a]}{\abs{\mf[a]}} \right)} + \tamb, \label{eq:texp}
\end{align}
assuming $\mf[a] \neq 0$. Here, $\tstart[a]$ and $\tend[a]$ are the
temperatures in \degreeCelsius\ at the start and end of arc $a$ (in
flow direction), respectively, $\tamb$ is the ambient temperature in
\degreeCelsius, $\heatloss[a]$ is the \emph{heat loss coefficient} in
\kwmc, and~$\heatcap$ is the \emph{specific heat capacity} in
\kjkgc. The coefficient $\heatloss[a]$ denotes the rate of heat
exchange between the water and the surroundings through the pipe
wall. The formula states that increasing start temperatures as well as
decreasing mass flows would result in a larger temperature loss, while
lower start temperatures and high mass flows leads to a lower
temperature loss. For a derivation of the formula from the
one-dimensional stationary thermal energy equation, we refer to,
e.g.,~\cite{dhnExpansion}.

\paragraph{Approximation}
Since~\eqref{eq:texp} is hard to handle, we consider an approximation of
this equation using the first order Taylor polynomial
($\exp{\left(-\frac{ 1 }{x}\right)} \approx 1 - \frac{1}{x}$), which yields
\begin{equation}\label{eq:TempApprox1}
  \tend[a] \approx  ( \tstart[a] - \tamb ) \left(1 -\frac{ \heatloss[a]}{\heatcap} \frac{\length[a]}{ \abs{\mf[a]} } \right) + \tamb.
\end{equation}
For $\mf{} \neq 0$, we can reformulate this to
\begin{equation}\label{eq:TempApprox}
  \tend[a]\, \abs{\mf[a]} = \tstart[a]\, \abs{\mf[a]} - \frac{ \heatloss[a]\,\length[a]}{\heatcap} ( \tstart[a] - \tamb ).
\end{equation}

\begin{remark}
  The approximation in~\eqref{eq:TempApprox} is quite accurate for mass flows
  that are not too small and practical choices of the pipe parameters: if
  $\mf[a] \geq 1.5 \kgs$ the temperature difference is below
  \SI{1}{\celsius}, even for thick and long pipes, see~\cite{Reh25}.
\end{remark}

\begin{remark}
  Under the natural assumption that $\tstart[a]$, $\tend[a] \geq \tamb$,
  Equation~\eqref{eq:TempApprox} yields a lower bound
  $\abs{\mf[a]} \geq \heatloss[a]\,\length[a] / \heatcap$,
  because~\eqref{eq:TempApprox1} implies that
  $\smash{1 -\tfrac{\heatloss[a]}{\heatcap} \frac{\length[a]}{\abs{\mf[a]}}} \geq 0$. Moreover, if $\mf[a]$
  approaches this bound, then $\tend[a]$ tends to $\tamb$.
\end{remark}

Note that~\eqref{eq:TempApprox} depends on the flow direction, because the
temperature $\tstart[a]$ at the start of the arc $a = (u,v)$ depends on
it. With the node temperatures $\tnode[u]$ and $\tnode[v]$ as well as flow
direction variables, we express
\[
  \tstart[a] \define \tnode[u] \, \mfpos[a] + \tnode[v] \, \mfneg[a].
\]

\paragraph{Temperature Mixing}
By the conservation of energy and the assumption of perfect
mixing, it follows that the temperature~$\tnode[v]$ at a node $v \in \nodes$ is
determined by mixing the temperatures~$\tend[a]$ at the end of inflowing
arcs~$a$ scaled by the corresponding mass flow rate~$\mf[a]$ as follows:
\begin{align}\label{eq:TemperatureMixing}
  \tnode[v] = \frac{\displaystyle\sum_{a \in \delta^-(v)} \mfpos[a]\, \mf[a]\, \tend[a]
  - \sum_{a \in \delta^+(v)} \mfneg[a]\, \mf[a]\, \tend[a]}{\displaystyle\sum_{a \in
  \delta^-(v)} \mfpos[a]\, \mf[a] - \sum_{a \in \delta^+(v)} \mfneg[a]\, \mf[a]}.
\end{align}
A derivation of this constraint from the conservation of energy can be
found in \cite{koecher}. We implement this equation in the form obtained by
multiplying the denominator to $\tnode[v]$. To shorten the specification of
the models in the following, we abbreviate~\eqref{eq:TemperatureMixing} as
$\text{TM}_v(\mf[],\mfpos[],\mfneg[],\tend[],\tnode[v])$.

\subsection{Modeling of Suppliers}
\label{sec:SupplierModel}

The produced thermal energy $\heatpower[a]$ of heatings (energy
suppliers) depends on the temperature difference $\tnode[u] - \tend[a]$ on
the arc $a = (u,v) \in \esupplier$ and the mass flow rate $\mf[a]$ as well
as the specific heat capacity $\heatcap$:
\begin{align*}
  \heatpower[a] = \heatcap \, \mf[a] \, (\tnode[u] - \tend[a] ),
\end{align*}
see, e.g., Borsche et al.~\cite{borsche}
and~\cite{koecher,dhnExpansion}. The temperature $\tend[a]$ after the
heating $a \in \esupplier$ is bounded by $\tubound{a}$. Similarly,
$\heatpower[a]$ is lower bounded by $\powerbound[a]$. Note that
$\heatpower[a] \leq 0$.

To enable the flow of the water through the network, the pressure has to be
increased at heatings using pumps. We define the electric power
$\pumppower[a]$ of the pump needed to increase the pressure from $\pr[u]$
to $\pr[v]$ analogously to Krug et al.~\cite{nodhn} by:
\begin{align*}
  \pumppower[a] = \frac{1}{\pumpeff[a]\, \rho} \, (\pr[v] - \pr[u]) \, \mf[a],
\end{align*}
where $\pumpeff[a] \in [0,1]$ denotes the efficiency of a pump.

Since only pressure differences occur in the model, the pressure can
be fixed at one node, e.g., the end node $\bar{v}$ of one heating arc
$a \in \esupplier$, to get uniquely defined pressure values:
\begin{align*}
  \pr[\bar{v}] = \pfix.
\end{align*}
By choosing this value sufficiently large, it is ensured that the pressure
does not fall below a certain pressure value, which is needed to prevent
vaporization. We use \SI{10}{\bar} = \SI[retain-unity-mantissa=false]{1e6}{\pascal}.

\subsection{Modeling of Demands}
\label{sec:DemandModell}

The formula of the thermal energy for a demand arc is analogous to the
heatings, but we assume the energy of the demands $\heatdemand[a]$,
$a \in \edemand$, to be given. We do not model the inner processes of the
consumers, but we instead assume that the outgoing temperature $\tend[a]$ at
every consumer is given by a fixed value $\tfix$.  This assumption is
motivated by the fact that energy transmission operators require a specific
temperature when leaving consumers, such that the return network part and
therefore the incoming temperature at the supplier does not exceed a
maximal temperature.  Thus, for all demand arcs $a = (u,v)\in \edemand$ the
following holds:
\begin{align*}
  \heatdemand[a] = \heatcap\, \mf[a]\, (\tnode[u] - \tend[a]) = \heatcap \, \mf[a] \, (\tnode[u] - \tfix).
\end{align*}

Furthermore, we do not model the pressure control within the
consumers. Instead, to ensure that their heat demand can be fulfilled, we
require a lower bound $\plossmin > 0$ on the pressure difference:
\begin{align*}
  \pr[u] - \pr[v] \geq \plossmin.
\end{align*}

\subsection{Resulting Optimization Model}
\label{sec:OptModel}

The two main cost factors when operating a heating network are the
electrical power of the pumps and the heat energy needed at heating arcs.
Therefore, we consider
$\sum_{a \in \esupplier} ( \pumppower[a] - \heatpower[a])$ as the costs
that we want to minimize; recall that $\heatpower[a] \leq 0$.  The
resulting optimization problem is as follows:
\begin{align}
  \min\quad & \sum_{a \in \esupplier} (\pumppower[a] - \heatpower[a]), \label{eq:objective} \\
  \text{s.t.} &   \sum_{a \in \delta^{+}(v)} \mf[a] - \sum_{a \in \delta^-(v)} \mf[a] = 0 && \forall\, v \in \nodes,\label{eq:massconservation}\\
  & \text{TM}_v(\mf[],\mfpos[],\mfneg[],\tend[],\tnode[v]) && \forall v \in \nodes,\label{eq:tmixing}\\
  & \pr[u] - \pr[v] = \zeta_{a} \, \mf[a] \abs{\mf[a]} - \rho g \, (\height[u] - \height[v]) && \forall\, a = (u,v) \in \epipe, \label{eq:ppipe} \\
  & \pr[u] - \pr[v] \geq \plossmin && \forall\, a = (u,v) \in \edemand, \label{eq:prdemand} \\
  & \pumppower[a] = \frac{1}{\pumpeff[a]\, \rho} \, (\pr[v] - \pr[u]) \, \mf[a] && \forall\, a = (u,v) \in \esupplier, \label{eq:prpump} \\
  & \abs{\mf[a]}\, \tend[a] = \abs{\mf[a]}\, \tstart[a] - \tfrac{ \heatloss[a] \, \length[a]}{\heatcap} \,  (\tstart[a] - \tamb) && \forall\, a \in \epipe, \label{eq:tpipe} \\
  & \heatcap \, \mf[a] \, (\tnode[u] - \tfix) = \heatdemand[a] && \forall\, a = (u,v) \in \edemand, \label{eq:tdemand} \\
  & \heatpower[a] = \heatcap \, \mf[a] \, (\tnode[u] - \tend[a])  && \forall\, a = (u,v) \in \esupplier, \label{eq:theating} \\
  & \tstart[a] = \tnode[u] \, \mfpos[a] + \tnode[v] \, \mfneg[a] && \forall\, a = (u,v) \in \epipe, \label{eq:tstart}\\
  & \mfpos[a] + \mfneg[a] = 1 && \forall\, a \in \epipe, \label{eq:binvars} \\
  & \pr[\bar{v}] = \pfix, && \label{eq:pfix} \\
  & -\mubound[a] \, \mfneg[a] \leq \mf[a] \leq \mubound[a] \, \mfpos[a] && \forall\, a \in \epipe, \label{eq:mfbound} \\
  & \tnode[v] \geq \tamb && \forall\, v \in \nodes, \label{eq:tbound1} \\
  & \tend[a] \geq \tamb && \forall\, a \in \epipe, \label{eq:tbound2} \\
  & \tend[a] = \tfix && \forall a \in \edemand, \label{eq:minTout} \\
  & \tend[a] \leq \tubound{a} && \forall\, a \in \esupplier, \label{eq:maxTin} \\
  & \pumppower[a] \geq 0 && \forall\, a \in \esupplier, \label{eq:pos2} \\
  & \powerbound[a] \leq \heatpower[a] \leq 0 && \forall\, a \in \esupplier, \label{eq:neq} \\
  & \mf[a] \geq 0 && \forall\, a \in \edemand \cup \esupplier,\label{eq:mfdem}  \\
  & \mfpos[a],\; \mfneg[a] \in \{0,1\} && \forall\, a \in \epipe. \label{eq:binary}
\end{align}
Recall that~\eqref{eq:pfix} fixes the pressure at particular node~$\bar{v}$ to $\pfix$.
In Appendix~\ref{sec:ImplementedModel}, we present the model as implemented
in our solver.

\begin{remark}
  One challenge in solving~\eqref{eq:objective}--~\eqref{eq:binary} lies in
  the fact that the demand can be satisfied by a combination (product) of
  temperature and flow rate and similarly for the heat supply. Moreover,
  the nonlinear temperature mixing constraints~\eqref{eq:tmixing} are based
  on the flow directions.
\end{remark}

\paragraph{Degrees of Freedom}
The variables for $v \in \nodes$ are $\tnode[v]$ and $\pr[v]$. The
variables for $a \in \epipe$ are $\mf[a]$, $\mfpos[a]$, $\mfneg[a]$,
$\tstart[a]$, $\tend[a]$. For $a \in \esupplier$ they are
$\mf[a]$, $\pumppower[a]$, $\tend[a]$, $\heatpower[a]$. Finally, for $a \in
\edemand$ they are $\mf[a]$ and $\tend[a]$. This yields $2\, \card{\nodes} + 5\,
\card{\epipe} + 4\, \card{\esupplier} + 2\, \card{\edemand}$ variables.

On every node we have a constraint for the mass
conservation~\eqref{eq:massconservation} and temperature
mixing~\eqref{eq:tmixing}.  Recall that the mass conservation constraints
have rank $\card{V} - 1$. Thus, we have $2\, \card{\nodes} -1 $ linearly
independent mass conservation and temperature mixing constraints.

Additionally, for $a \in \epipe$ , we have Equations~\eqref{eq:ppipe},
\eqref{eq:tpipe}, \eqref{eq:tstart}, \eqref{eq:binvars}, and
\eqref{eq:mfbound}.  For $a \in \esupplier$, we have
Equations~\eqref{eq:prpump} and \eqref{eq:theating}. For $a \in \edemand$,
we have Equations~\eqref{eq:minTout} and~\eqref{eq:tdemand}. Together with
Equation~\eqref{eq:pfix}, we get
$2\, \card{V} + 5\, \card{\epipe} + 2\, \card{\esupplier} + 2\,
\card{\edemand}$ equations. Thus, there are $2\, \card{\esupplier}$ more
variables than equations.

\section{Properties of Stationary Heating Networks}
\label{sec:Properties}

In this section we derive useful properties of solutions of heating networks.

\subsection{Flow Directions and Acyclicity}
\label{sec:FlowDirection}

In flexible heating networks, the flow directions are important. If they
are fixed, an LP-relaxation of Constraints~\eqref{eq:tmixing}
and~\eqref{eq:ppipe} can be made much tighter, which generally improves the
solution speed.

Following the ideas in~\cite{HabP24}, the flow direction variables
$\mfpos[a]$ and $\mfneg[a]$ can be used to derive restrictions that
possibly allow to fix directions during presolving. For instance, because
of flow conservation, the flow of at least one arc must enter a node and
the flow of at least one arc must leave it:
\begin{subequations}
  \label{eq:BinaryFlowConservation}
  \begin{align}
    \sum_{a \in \delta^-(v)} \mfpos[a] + \sum_{a \in \delta^+(v)} \mfneg[a] \geq 1 & \quad \forall v \in \nodes, \\
    \sum_{a \in \delta^+(v)} \mfneg[a] + \sum_{a \in \delta^-(v)} \mfpos[a] \geq 1 & \quad \forall v \in \nodes.
  \end{align}
\end{subequations}
Since for arcs $a \in \esupplier \cup \edemand$, $\mfpos[a]$ can be fixed
to 1 and $\mfneg[a]$ can be fixed to 0, the inequalities might only contain
one unfixed variable, which can then instead directly be fixed as well, or
the inequality is redundant.

\begin{figure}[tb]
  \centering
  \begin{minipage}[t]{.49\linewidth}
    \centering
    \begin{tikzpicture}[scale=0.7]
      \node at (0,3) [circle,draw, very thick] (va){$v^S_1$};
      \node at (2.5,3) [circle,draw, very thick] (vb){$v^S_2$};
      \node at (5,3) [circle,draw, very thick] (vc){$v^S_3$};
      \node at (7.5,3) [circle,draw, very thick] (vd){$v^S_4$};
      \node at (0,0) [circle,draw, very thick] (ra){$v^R_1$};
      \node at (2.5,0) [circle,draw, very thick] (rb){$v^R_2$};
      \node at (5,0) [circle,draw, very thick] (rc){$v^R_3$};
      \node at (7.5,0) [circle,draw, very thick] (rd){$v^R_4$};
      \draw[->, color=blue, very thick] (ra)-- node[left]{$a^{H}_{1}$}(va);
      \draw[->, color=blue, very thick] (rd)-- node[left]{$a^{H}_{4}$}(vd);
      \draw[->, color=blue, very thick] (va) -- (vb);
      \draw[<->, very thick, dashed] (vb) -- (vc);
      \draw[->, color=blue, very thick] (vc)-- node[left]{$a^{D}_{3}$}(rc);
      \draw[<->, very thick, dashed] (rc)-- (rb);
      \draw[->, color=blue, very thick] (rb) -- (ra);
      \draw[->, color=blue, very thick] (vb)-- node[left]{$a^{D}_{2}$}(rb);
      \draw[->, color=blue, very thick] (vd) -- (vc);
      \draw[->, color=blue, very thick] (rc) -- (rd);
    \end{tikzpicture}
    \subcaption{Example network with two suppliers.}
    \label{fig:2_bid_regina}
  \end{minipage}
  \hfill%
  \begin{minipage}[t]{.49\linewidth}
    \centering
    \begin{tikzpicture}[scale=0.7]
      \node at (0,3) [circle,draw, very thick] (va){$v^S_1$};
      \node at (2.5,3) [circle,draw, very thick] (vb){$v^S_2$};
      \node at (5,3) [circle,draw, very thick] (vc){$v^S_3$};
      \node at (0,0) [circle,draw, very thick] (ra){$v^R_1$};
      \node at (2.5,0) [circle,draw, very thick] (rb){$v^R_2$};
      \node at (5,0) [circle,draw, very thick] (rc){$v^R_3$};
      \draw[->, color=blue, very thick] (va)-- node[left]{$a^{D}_{1}$}(ra);
      \draw[->, color=blue, very thick] (rb)-- node[left]{$a^{H}_{2}$}(vb);
      \draw[->, color=blue, very thick] (vc)-- node[left]{$a^{D}_{3}$}(rc);
      \draw[<-, color=blue, very thick] (va) -- (vb);
      \draw[->, color=blue, very thick] (vb) -- (vc);
      \draw[<-, color=blue, very thick] (rb) -- (ra);
      \draw[->, color=blue, very thick] (rc) -- (rb);
      \draw[<->, dashed, very thick] (rc) -- (6.25,0) -- (6.25,-1) -- (-1.25,-1) -- (-1.25,0) -- (ra);
      \draw[<->, dashed, very thick] (vc) -- (7,3) -- (7,-1.5) -- (-2,-1.5) -- (-2,3) -- (va);
    \end{tikzpicture}
    \subcaption{Example network with cycle.}
    \label{fig:2_bid_circulus}
  \end{minipage}
  \caption{Example networks with propagated mass flow directions (dashed
    black arcs). Supply nodes are on top on the left and outside on the right.}
  \label{fig:2_example_bid}
\end{figure}

Moreover, there cannot be nonzero flow along cycles in the supply or return part alone.
To this end, let~$C$ be a cycle in $\esup$ or $\eret$, i.e., $C$ forms a
cycle in the underlying undirected graph. Fix an arbitrary direction of $C$
and let $C^+$ and $C^-$ be the subsets of arcs in $C$ in forward and backward
direction, respectively. There is a \emph{nonzero flow along $C$}, if
$\mf[a]$ is positive for arcs in $C^+$ and negative for $C^-$, or
conversely.

\begin{lemma}\label{lem:NoCycle}
  Let $C \subseteq \esup$ or $C \subseteq \eret$ be a cycle. Then in every
  feasible solution, there cannot exist a nonzero flow along $C$.
\end{lemma}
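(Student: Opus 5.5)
Assume a nonzero flow along $C$ exists and derive a contradiction from the pressure-loss equations~\eqref{eq:ppipe}. By reversing the chosen orientation of $C$ if needed, we may take $\mf[a] > 0$ for all $a \in C^+$ and $\mf[a] < 0$ for all $a \in C^-$. Orient $C$ as a closed walk $v_0, v_1, \dots, v_k = v_0$ of distinct nodes. Each step $v_{i-1} \to v_i$ uses an arc $a_i$, either $a_i = (v_{i-1}, v_i) \in C^+$ or $a_i = (v_i, v_{i-1}) \in C^-$. Since $C$ lies entirely in $\esup$ or in $\eret$, every arc of $C$ is a pipe arc, so~\eqref{eq:ppipe} holds for every arc on the cycle.

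The key step is to telescope the pressure differences along the cycle:
\[
  0 = \sum_{i=1}^k (\pr[v_{i-1}] - \pr[v_i]).
\]
For a forward arc $a_i = (v_{i-1},v_i) \in C^+$,~\eqref{eq:ppipe} gives
\[
  \pr[v_{i-1}] - \pr[v_i] = \zeta_{a_i}\mf[a_i]\abs{\mf[a_i]} - \rho g(\height[v_{i-1}] - \height[v_i]).
\]
For a backward arc $a_i = (v_i, v_{i-1}) \in C^-$, the equation reads
\[
  \pr[v_i] - \pr[v_{i-1}] = \zeta_{a_i}\mf[a_i]\abs{\mf[a_i]} - \rho g(\height[v_i]-\height[v_{i-1}]),
\]
so negating it yields
\[
  \pr[v_{i-1}] - \pr[v_i] = -\zeta_{a_i}\mf[a_i]\abs{\mf[a_i]} - \rho g(\height[v_{i-1}] - \height[v_i]).
\]

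Summing over the cycle, the height terms $\sum_i (\height[v_{i-1}] - \height[v_i])$ telescope to zero. This leaves
\[
  0 = \sum_{a \in C^+} \zeta_a \mf[a]\abs{\mf[a]} - \sum_{a \in C^-}\zeta_a \mf[a]\abs{\mf[a]}.
\]
Each $\zeta_a > 0$, since $\pipefriction[a]$, $\length[a]$, $\diam[a]$ and $\rho$ are all positive; for $\pipefriction[a]$ we only need the Nikuradse expression~\eqref{eq:Nikuradse} to be well defined, and its value is then the reciprocal of a square. For $a \in C^+$ we have $\mf[a] > 0$, so $\mf[a]\abs{\mf[a]} > 0$. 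For $a \in C^-$ we have $\mf[a] < 0$, so $-\mf[a]\abs{\mf[a]} > 0$. The right-hand side is therefore a sum of $k \ge 3$ strictly positive terms, which contradicts its being zero.

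There is no real obstacle here; the only care needed is in the bookkeeping. First, the sign convention of~\eqref{eq:ppipe} must be handled correctly for backward arcs. Second, we must check that the height (gravity) terms cancel, which holds precisely because the walk is closed. Third, we use that a simple graph has no cycles of length below $3$, although any $k \ge 1$ already suffices for the argument. The same computation in fact shows the slightly stronger statement that the flows along a cycle cannot all be weakly aligned with the orientation unless they all vanish.
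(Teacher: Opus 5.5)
Your proof is correct and follows the same route as the paper: you telescope the pressure-loss equations~\eqref{eq:ppipe} around the cycle, cancel the height terms, and observe that the remaining signed sum of $\zeta_a \mf[a]\abs{\mf[a]}$ is strictly positive under a nonzero cycle flow. Your explicit check that $\zeta_a > 0$ and your careful sign bookkeeping for backward arcs make precise what the paper's proof leaves implicit.
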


\begin{proof}
  Summing the pressure differences on the left hand side of
  Constraints~\eqref{eq:ppipe} along $C^+$ and their negatives along $C^-$ yields
  \[
    \sum_{a = (u,v) \in C^+} (\pr[u] - \pr[v]) - \sum_{a = (u,v) \in C^-} (\pr[u] - \pr[v]) = 0,
  \]
  because the pressures cancel out. The sum of the right hand sides yields:
  \begin{align*}
    & \sum_{a = (u,v) \in C^+} \big(\zeta_{a} \, \mf[a] \abs{\mf[a]} - \rho g \, (\height[u] - \height[v])\big)
    - \sum_{a = (u,v) \in C^-} \big(\zeta_{a} \, \mf[a] \abs{\mf[a]} - \rho g \, (\height[u] - \height[v])\big)\\
    = & \sum_{a = (u,v) \in C^+} \zeta_{a} \, \mf[a] \abs{\mf[a]} - \sum_{a = (u,v) \in C^-} \zeta_{a} \, \mf[a] \abs{\mf[a]}
  \end{align*}
  because the heights cancel out along the cycle as well. A nonzero flow
  along $C$ would imply that this sum is nonzero, which is a contradiction.
\end{proof}

This implies the validity of the following constraints:
\begin{subequations}\label{eq:CycleConstraints}
  \begin{align}
    \sum_{a \in C^+} \mfpos[a] + \sum_{a \in C^-} \mfneg[a] \leq \card{C} - 1,\\
    \sum_{a \in C^+} \mfneg[a] + \sum_{a \in C^-} \mfpos[a] \leq \card{C} - 1.
  \end{align}
\end{subequations}

\subsection{Mass Flow Equality in Supply and Return Part}
\label{sec:FlowEquality}

In the following, we will prove that corresponding flows on the supply and
return part are equal. To this end, we generally assume that each node
$v^S$ in the supply part has a unique \emph{corresponding} node $v^R$ in
the return part, and similarly each arc $a^S$ in the supply part has a
unique corresponding arc $a^R$ in the return part with the same pipe
parameters. This implies that the topology of the supply and return part
are identical.

We first show that the flow along a node in the supply part is the negative
of the flow along the corresponding node in the return part.

\begin{lemma}\label{lem:NodeFlowSupRet}
  Let $\graph = (\nodes, \edges)$ be a heating network. Consider a node
  $v^S \in \nsup$ in the supply part with corresponding node
  $v^R \in \nret$ in the return part. Then
  \begin{align*}
    \sum_{a \in \delta^{+}_{S}(v^S)} \mf[a] - \sum_{a \in \delta^-_{S}(v^S)} \mf[a] =
    - \Big(\sum_{a \in \delta^{+}_{R}(v^R)} \mf[a] - \sum_{a \in \delta^-_{R}(v^R)} \mf[a]\Big),
  \end{align*}
  where $\delta^{+}_{S}(v^S) \define \delta^+(v^S) \cap \esup$ and
  similarly for $\delta^-_S$, $\delta^+_R$, and $\delta^-_R$.
\end{lemma}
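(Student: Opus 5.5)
The plan is to use flow conservation~\eqref{eq:massconservation} at $v^S$ and at $v^R$ separately, and then use the assumption that every node is incident to exactly one heating or demand arc linking it to its corresponding node. I take this to mean that the heating or demand arc at $v^S$ is the arc between $v^S$ and $v^R$. If this is not already implicit in the correspondence assumption, I would make it explicit. It is the natural reading of the topology: a consumer is a demand arc $(v^S,v^R)$, and a supplier is a heating arc $(v^R,v^S)$.

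First, I split the arcs incident to $v^S$ into pipe arcs in $\esup$ and the non-pipe arcs in $\esupplier \cup \edemand$. By~\eqref{eq:massconservation},
\[
  \sum_{a \in \delta^{+}_{S}(v^S)} \mf[a] - \sum_{a \in \delta^-_{S}(v^S)} \mf[a] = \sum_{a \in \delta^-(v^S)\cap(\esupplier\cup\edemand)} \mf[a] - \sum_{a \in \delta^+(v^S)\cap(\esupplier\cup\edemand)} \mf[a] =: \phi(v^S).
\]
I do the same at $v^R$ with $\eret$ and obtain $\phi(v^R)$.

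Next, I claim $\phi(v^R) = -\phi(v^S)$. Each heating or demand arc joins a supply node to a return node, since $\esupplier \subset \nret \times \nsup$ and $\edemand \subset \nsup \times \nret$.
\begin{itemize}
  \item A demand arc $a=(v^S,v^R)$ leaves $v^S$ and enters $v^R$. It contributes $-\mf[a]$ to $\phi(v^S)$ and $+\mf[a]$ to $\phi(v^R)$.
  \item A heating arc $a=(v^R,v^S)$ contributes $+\mf[a]$ to $\phi(v^S)$ and $-\mf[a]$ to $\phi(v^R)$.
\end{itemize}
If the only non-pipe arcs at $v^S$ and $v^R$ are arcs between these two nodes, the contributions cancel pairwise, which gives the claim. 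Substituting back into the two conservation identities yields the statement of Lemma~\ref{lem:NodeFlowSupRet}.

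The main obstacle is justifying that the heating or demand arcs at $v^S$ end exactly at $v^R$. The correspondence assumption only talks about pipe arcs, and the phrase ``each node is connected to a heating or demand arc'' does not by itself say where that arc ends. I would first try to read it off the modeling convention, where a supplier or consumer sits at a location and connects the supply node there with the return node at the same location. Failing that, I would add this as part of the correspondence assumption. Without it the statement can fail, for example with a demand arc from $v^S_2$ to $v^R_3$ in Figure~\ref{fig:modell1}. Nothing else requires care: the heights, pressures and nonlinear constraints play no role, and the argument is purely linear bookkeeping from~\eqref{eq:massconservation}.
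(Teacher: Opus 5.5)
Your proposal is correct and matches the paper's proof: apply flow conservation at $v^S$ and at $v^R$, note that the single heating or demand arc $b$ joining them appears as $\pm\dot{m}_b$ in one balance and $\mp\dot{m}_b$ in the other, and eliminate it. The point you flag (that this arc ends exactly at $v^R$ and is the only non-pipe arc at either node) is precisely what the paper takes from its ``general assumption'' that corresponding nodes are connected by a demand or heating arc, so your reading agrees with the paper's.
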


\begin{proof}
  Recall that by our general assumption, the nodes $v^S$ and $v^R$ are
  connected by either a demand or a heating arc. Flow conservation implies
  that
  \[
    \sum_{a \in \delta^{+}(v^S)} \mf[a] - \sum_{a \in \delta^-(v^S)} \mf[a] \pm \mf[b] = 0,
  \]
  where $b \in \edemand \cup \esupplier$ and $\mf[b]$ is the flow over this
  arc; its sign depends on the type of arc, i.e., whether
  $b \in \edemand$ or $b \in \esupplier$. Similarly,
  \[
    \sum_{a \in \delta^{+}(v^R)} \mf[a] - \sum_{a \in \delta^-(v^R)} \mf[a] \mp \mf[b] = 0,
  \]
  Solving for $\mf[b]$ and equating yields the claim.
\end{proof}

We next prove the main result of this section.

\begin{theorem}\label{thm:SupRetEqual}
  Let $\graph = (\nodes, \edges)$ be a heating network with supply graph
  $\gsup = (\nsup, \esup)$ and return graph $\gret = (\nret, \eret)$. For a
  supply arc $a^S \in \esup$, let $a^R$ be the corresponding return arc,
  where we assume that they are oriented in opposite directions and that
  $\zeta_{a^S} = \zeta_{a^R}$. Then for every feasible solution
  $\mf[a^S] = \mf[a^R]$.
\end{theorem}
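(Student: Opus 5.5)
The plan is to compare the supply flows with the return flows \emph{re-oriented into the supply orientation}, show that their difference is a circulation on the common topology, and then exclude a nonzero circulation by the same cycle-summation argument as in the proof of Lemma~\ref{lem:NoCycle}. The new ingredient is that we compare two networks rather than one.

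\textbf{Step 1: transfer the return flows.} For each supply arc $a^S = (u^S, w^S) \in \esup$, the corresponding return arc is $a^R = (w^R, u^R)$, since the two are oppositely oriented. Define $g_{a^S} \define \mf[a^R]$ as a flow on the supply topology $\gsup$. Arc $a^R$ leaves $w^R$ and enters $u^R$. It therefore contributes to the inflow of $g$ at $u^S$'s counterpart in reversed roles, and one checks
\[
  \sum_{a \in \delta^{+}_{R}(v^R)} \mf[a] - \sum_{a \in \delta^-_{R}(v^R)} \mf[a]
  = -\Big(\sum_{a \in \delta^{+}_{S}(v^S)} g_a - \sum_{a \in \delta^-_{S}(v^S)} g_a\Big)
\]
for every $v^S \in \nsup$. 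Combining this with Lemma~\ref{lem:NodeFlowSupRet}, the net outflow of $\mf[]$ restricted to $\esup$ equals the net outflow of $g$ at every supply node. Hence $d_a \define \mf[a^S] - g_{a^S}$, for $a^S \in \esup$, is a circulation on $\gsup$: its net outflow is zero at every node. The claim $\mf[a^S] = \mf[a^R]$ is exactly $d \equiv 0$.

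\textbf{Step 2: extract a cycle.} Suppose $d \neq 0$. By the standard flow-decomposition argument, some cycle $C$ of the underlying undirected graph of $\gsup$ carries $d$ consistently. With respect to some orientation of $C$, we have $d_a > 0$ for $a \in C^+$ and $d_a < 0$ for $a \in C^-$. To see this, follow arcs with $d_a \neq 0$ in the direction of $d$. Conservation guarantees that every node entered can be left again via such an arc, so a node must eventually repeat. Thus $\mf[a^S] > g_{a^S}$ on $C^+$ and $\mf[a^S] < g_{a^S}$ on $C^-$.

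\textbf{Step 3: sum pressure losses around both copies of $C$.} Let $\varphi_a(x) \define \zeta_a\, x \abs{x}$, which is strictly increasing. Summing \eqref{eq:ppipe} around $C$ in the supply part, exactly as in the proof of Lemma~\ref{lem:NoCycle}, the pressures and heights cancel. This gives
\[
  \sum_{C^+} \varphi_a(\mf[a^S]) - \sum_{C^-} \varphi_a(\mf[a^S]) = 0.
\]
The corresponding arcs form a cycle in $\gret$ with all orientations reversed. Summing \eqref{eq:ppipe} there, and using $\zeta_{a^S} = \zeta_{a^R}$, gives the same identity with $g$ in place of $\mf[]$, up to a global sign. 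Subtracting the two identities yields
\[
  \sum_{C^+} \big(\varphi_a(\mf[a^S]) - \varphi_a(g_a)\big) + \sum_{C^-} \big(\varphi_a(g_a) - \varphi_a(\mf[a^S])\big) = 0.
\]
Every term on the left is strictly positive by strict monotonicity and Step~2, which is a contradiction. Note that this argument needs no assumption relating the heights of corresponding nodes, since heights cancel separately along each cycle.

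The main obstacle is bookkeeping. One must get the signs right when moving the return flows onto the supply orientation, both in the node balance of Step~1 and in the cycle sum of Step~3. A sign slip there would yield $\mf[a^S] = -\mf[a^R]$ instead of the claimed equality. The cycle extraction in Step~2 is routine.
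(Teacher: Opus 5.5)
Your proof is correct and takes essentially the same route as the paper: the circulation $d$ and the cycle extraction of Step~2 repackage the paper's walk, which uses Lemma~\ref{lem:NodeFlowSupRet} to keep finding a next arc pair with $\mf[a_k^S] > \mf[a_k^R]$ until a node repeats (your conservation argument also covers the paper's separate leaf case), and the contradiction again comes from summing the pressure-loss equations around the cycle. Your Step~3 is in fact more careful than the paper's terse appeal to Lemma~\ref{lem:NoCycle}: since only $\mf[a^S] > \mf[a^R]$ is known on the cycle, the lemma's statement alone does not give the contradiction, and one needs its cycle identity on both copies combined with $\zeta_{a^S} = \zeta_{a^R}$ and strict monotonicity of $x\abs{x}$, which is exactly what you do.
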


\begin{proof}
  Assume that there exists a supply arc $a_1^S = (v_1^S, v_2^S) \in \esup$
  with corresponding return arc $a_1^R = (v_2^R, v_1^R) \in \eret$ such
  that $\smash{\mf[a_1^S] \neq \mf[a_1^R]}$ (note that we can assume w.lo.g.\ that
  $a_1^S$ and $a_1^R$ are oriented in opposite directions). By possibly
  reorienting the two arcs (and thus multiplying the flows by $-1$), we can
  assume w.l.o.g.\ that $\mf[a_1^S] > \mf[a_1^R]$.

  If $v_2^S$ is a leaf node of $\gsup$, i.e., it has degree 1 in this
  graph, then flow conservation implies a contradiction to the
  assumption. Otherwise, by Lemma~\ref{lem:NodeFlowSupRet}, we have
  \begin{align*}
    \sum_{a \in \delta^{+}_{S}(v_2^S)} \mf[a] - \sum_{a \in \delta^-_{S}(v_2^S)} \mf[a]
    + \sum_{a \in \delta^{+}_{R}(v_2^R)} \mf[a] - \sum_{a \in \delta^-_{R}(v_2^R)} \mf[a] = 0.
  \end{align*}
  This implies
  \begin{align*}
    \mf[a_1^S] - \mf[a_1^R] = \sum_{a \in \delta^{+}_{S}(v_2^S)} \mf[a] - \sum_{a \in \delta^-_{S}(v_2^S) \setminus \{a_1^S\}} \mf[a]
    + \sum_{a \in \delta^{+}_{R}(v_2^R) \setminus \{a_1^R\}} \mf[a] - \sum_{a \in \delta^-_{R}(v_2^R)} \mf[a].
  \end{align*}
  The assumption $\mf[a_1^S] > \mf[a_1^R]$ implies that
  \begin{align*}
    \sum_{a \in \delta^{+}_{S}(v_2^S)} \mf[a] - \sum_{a \in \delta^-_{S}(v_2^S) \setminus \{a_1^S\}} \mf[a]
    > \sum_{a \in \delta^-_{R}(v_2^R)} \mf[a] - \sum_{a \in \delta^{+}_{R}(v_2^R) \setminus \{a_1^R\}} \mf[a],
  \end{align*}
  which yields that
  \begin{align*}
    \sum_{a \in \delta^{+}_{S}(v_2^S)} \mf[a] > \sum_{a \in \delta^-_{R}(v_2^R)} \mf[a]
    \quad\text{ or }\quad
    \sum_{a \in \delta^-_{S}(v_2^S) \setminus \{a_1^S\}} \mf[a] < \sum_{a \in \delta^{+}_{R}(v_2^R) \setminus \{a_1^R\}} \mf[a].
  \end{align*}
  In the first case, there exists a supply arc $a_2^S = (v_2^S, v_3^S)$ and
  corresponding return arc $a_2^R = (v_3^R, v_2^R)$ such that
  $\mf[a_2^S] > \mf[a_2^R]$. In the second case, there exists
  $a_2^S \in \delta^-_S(v_2^S) \setminus \{a_1^S\}$ and
  $a_2^R \in \delta^{+}_{R}(v_2^R) \setminus \{a_1^R\}$ with
  $\mf[a_2^S] < \mf[a_2^R]$; we then reverse the orientation of $a_2^S$ and
  $a_2^R$ (and multiply the flows by $-1$) to get $a_2^S = (v_2^S, v_3^S)$
  and $a_2^R = (v_3^S, v_2^S)$. Thus, in both cases,
  $\mf[a_2^S] > \mf[a_2^R]$.

  We can then repeat this process, producing one new arc pair
  $(a_k^S, a_k^R)$ with $\smash{\mf[a_k^S] > \mf[a_k^R]}$ in each iteration $k$
  until $v_k^S$ is a leaf node in $\gsup$ or we reach a node that we have
  already considered. In the first case, we get a contradiction as
  above. Thus, assume that we repeat nodes, i.e., there exists a cycle
  $C^S \subseteq \esup$ with corresponding return part
  $C^R \subseteq \eret$ such that $\mf[a^S] > \mf[a^R]$ for all
  $a^S \in C^S$ with corresponding $a^R$ in $C^R$. By
  Lemma~\ref{lem:NoCycle}, there cannot be a nonzero flow along $C^S$ (and
  $C^R$), which contradicts our assumption.
\end{proof}

Similarly, the pressure differences in the return part are the negatives of
the pressure differences in the supply part.

\section{Numerical Results for Test Networks}
\label{sec:NumericalResultsI}

In this section, we present numerical results for several test networks.
We use two types of instances, generated networks
(see Section~\ref{sec:TestNetworks}) and reduced real-world networks
(Section~\ref{sec:ResultsReal-World}). The goal is to illustrate the
  effects of the different methods that we apply.

\subsection{Generation of Test Networks}
\label{sec:TestNetworks}

In this section, we very briefly describe the generation of test networks
with different sizes and properties. For more details, we refer
to~\cite{Reh25}.

The networks are generated with a given number $N$ of nodes in the supply
network, a number $\card{\esupplier}$ of suppliers, and a number $\card{C}$ of
cycles in the network. Based on this input data, randomized coordinates for
the supply nodes $\nsup$ are chosen and the nodes are connected via pipe
arcs ($\esup$), first such that a tree is formed and then possibly adding
arcs to create cycles. The coordinates are generated such that the length
of the connecting supply arcs is between \SI{1}{\meter} and
\SI{200}{\meter}. Then the corresponding return network is generated. The
coordinate of a return node equals the coordinate of the corresponding
supply node, but the $y$-coordinate is shifted by 1. The positions of the
supplier arcs $\esupplier$ and the demand arcs $\edemand$ are determined
such that every supply node is connected to the corresponding return node
via a supplier or a demand arc. The arrangement of supplier and demand arcs
is randomized. Table~\ref{tab:TestNetworks} contains a list of the
characteristics of the resulting $26$ test networks that we use in the
following.

\begin{table}[tb]
  \caption{Test networks in the gen-testset}
  \centering
  \begin{tabular}{@{}rrrr@{}}
    \toprule
    \# & $N$ & $\card{\esupplier}$ & $\card{C}$ \\
    \midrule
    1 & 10 & 1 & 0 \\
    2 & 50 & 1 & 0 \\
    3 & 70 & 1 & 0 \\
    4 & 10 & 2 & 0 \\
    5 & 50 & 2 & 0 \\
    6 & 50 & 10 & 0 \\
    7 & 70 & 10 & 0 \\
    8 & 100 & 10 & 0 \\
    9 & 10 & 1 & 1 \\
    10 & 50 & 1 & 1 \\
    11 & 70 & 1 & 1 \\
    12 & 10 & 2 & 1 \\
    13 & 50 & 2 & 1 \\
    \bottomrule
  \end{tabular}
  \qquad
  \begin{tabular}{@{}rrrr@{}}
    \toprule
    \# & $N$ & $\card{\esupplier}$ & $\card{C}$ \\
    \midrule
    14 & 10 & 2 & 2 \\
    15 & 50 & 2 & 2 \\
    16 & 70 & 2 & 2 \\
    17 & 100 & 2 & 2 \\
    18 & 70 & 5 & 2 \\
    19 & 120 & 10 & 2 \\
    20 & 150 & 20 & 2 \\
    21 & 150 & 30 & 2 \\
    22 & 50 & 10 & 4 \\
    23 & 70 & 10 & 4 \\
    24 & 120 & 10 & 4 \\
    25 & 150 & 10 & 4 \\
    26 & 150 & 20 & 4 \\
    \bottomrule
  \end{tabular}

  \label{tab:TestNetworks}
\end{table}

The pipes are made of pre-insulated rigid steel with specifications
as found in the manual for industrial
pipes~\cite{Isoplus_starre_Verbundsysteme}. Since we only consider primary
distribution lines, we use pipes of the sizes DN200 up to DN450;
see~\cite{Reh25} for more information.

Common supply temperatures are between \SIlist{70;150}{\celsius}, see
also~\cite{Planungshandbuch}. They depend on the size of the network, the
technical conditions of the producers, and the requirements of the connected
consumers. While consumers with a radiator require temperatures from
\SIrange{70}{90}{\celsius} (see~\cite{Planungshandbuch}), underfloor
heating systems can operate at much lower temperatures, see Lund et
al.~\cite{LUND20141}. Thus, the overall required temperatures at the consumers
range from \SIrange{60}{90}{\celsius}. Heat suppliers can be, for example,
combined heat and power plants (CHP), geothermal, solar thermal energy, or
waste heat possibly combined with heat pumps. For generating high
temperatures, combined heat and power plants are a favorable supplier
type~\cite{Planungshandbuch, geothermal}. In this paper, we do not model
the inner processes of different supplier types, but use exemplary networks
with different supplier types. Thus, we assume to have at least one large
supplier, which ensures the feasibility of the demands. This main
supplier has a maximal temperature bound \tubound{a} of \SI{150}{\celsius}.  The
other suppliers have maximal temperatures from
\SIrange{80}{150}{\celsius}.  To imitate the different supplier types and
dimensions, we also use different heat capacities~$\powerbound[a]$.

We scale the maximal heat demand according to the number of demand arcs. It can
range from $10$ up to $100$ times the number of demand arcs. For a network
with~10 demand arcs, for example, the maximal power can range from
\SIrange{100}{1000}{\kilo\watt}, while in a network with~100 demand arcs,
the maximal power can range from \SIrange{1000}{10000}{\kilo\watt}.  Since
the heat demand in a network varies due to seasons and daily heat load
variations, e.g., peaks in households in the morning or in the afternoon
and minimal heat demand during the night time, we consider different load
factors for our determined networks. For each network, we consider four
different scenarios that represent load factors between 30\,\% and 95\,\%
utilization of the network, see \cite{Planungshandbuch}.

\subsection{Implementation}\label{sec:Implementation}

We have implemented a solution algorithm for computing globally optimal
operations of district heating networks in C using SCIP~\cite{SCIP10}.
SCIP provides a framework for solving
mixed-integer nonlinear programs (MINLP). The core algorithm is spatial
branch-and-bound~\cite{TawS04} using bounds based on linear programming
(LP). The relaxations are computed using McCormick inequalities for the
product of two variables and gradient inequalities for convex
expressions. Spatial branching splits the variable domains into smaller
pieces allowing to refine the relaxations. This is recursively iterated
until a given feasibility accuracy is obtained. Our code reads problem
data in JSON format and builds the model. Then it calls SCIP to solve
it. Additionally, we have implemented the following methods:
\begin{itemize}
\item \texttt{Cycle}: Constraints~\eqref{eq:BinaryFlowConservation}
  are added at the beginning. Moreover,
  Constraints~\eqref{eq:CycleConstraints} are generated by running a
  depth-first search (DFS) over the supply/return part and
  adding~\eqref{eq:CycleConstraints} if a cycle is found. Note that
  this will only consider fundamental cycles, but for most real-world
  networks this will actually produce all cycles.
\item \texttt{SupRet}: We use Theorem~\ref{thm:SupRetEqual} to make
  the flows in the supply and return part equal.
\item \texttt{MixSimpl}: When applying standard presolving techniques
  as implemented in SCIP, in many cases the flow directions are
  already fixed. This can be used to simplify the temperature mixing
  constraints~\eqref{eq:tmixing}. In particular, if there is only one
  arc that carries flow into a node, no mixing is necessary. We
  therefore first run standard presolving and then test for these
  cases. If they appear, we replace~\eqref{eq:tmixing} by linear
  constraints. We then again presolve the problem if changes were
  made.
\item \texttt{LPHeur}: The computation of primal solutions of our optimization
  problem~\eqref{eq:objective}--\eqref{eq:binary} is one of the challenges
  for solving the presented model. We therefore implemented two \emph{primal
  heuristics}. The first heuristic runs every 10th level of the
  branch-and-bound tree. A copy of the current subproblem is created.
  Based on the current LP-solution, the binary flow direction variables are
  fixed, which results in a nonlinear program (NLP) containing only
  continuous variables. Then the resulting subproblem is solved with a node
  limit, which also calls an NLP-solver with suitable initial points. If
  the flow directions are correctly guessed and the initial point is good
  enough, this will very often produce good primal solutions.
\item \texttt{HydrHeur}: The second primal heuristic first computes a solution to the
  ``hydraulic'' problem that arises by ignoring all temperature variables
  and constraints. However, bounds on the flows from~\eqref{eq:tdemand} are
  taken into account. The resulting problem is solved with a limit on the
  number of nodes in the brand-and-bound tree. If a solution is found, it
  is used to fix the flow direction variables. Then the corresponding
  subproblem is solved as for \texttt{LPHeur}.
\item \texttt{BranchTree}: Since we assume positive pump costs, there
  must exist one demand arc in which the pressure difference between
  supply and return part is equal to the bound $\plossmin$. We
  implemented two \emph{branching rules}. Both are only applied at the
  root node and generate one child node for each possibility, in which
  we add the constraint $p_u - p_v = \plossmin$.

  In the first rule, we only branch on demand arcs that correspond to
  leaf nodes, if the underlying supply and return graphs $G^S$ and
  $G^R$, respectively, form a tree.
\item \texttt{BranchAll}: The second rule is run for arbitrary
  graphs. A branching node is generated for each demand arc (not only
  the leaf nodes).
\end{itemize}

\subsection{Experimental Setup}
\label{sec:ExperimentalSetup}

\begin{table}
  \caption{Comparison of different algorithm variants.}
  \label{tab:All}
  \begin{tabular*}{\textwidth}{@{}l@{\;\;\extracolsep{\fill}}rrr@{}}\toprule
    Setting & \#opt & time [s] & \#nodes \\
    \midrule
    Base                 & \num{  20} & \num{ 2342.01} & \num{ 35509.4} \\
    Cycle                & \num{  29} & \num{ 1721.58} & \num{ 25906.7} \\
    SupRet               & \num{  22} & \num{ 2121.71} & \num{ 35785.8} \\
    MixSimpl             & \num{  27} & \num{ 1954.06} & \num{ 30779.8} \\
    HydrHeur             & \num{  28} & \num{ 1778.78} & \num{ 23782.5} \\
    LPHeur               & \num{  27} & \num{ 2442.94} & \num{ 10381.6} \\
    Combined             & \num{  50} & \num{  958.86} & \num{  3702.6} \\
    BranchTree           & \num{  50} & \num{  925.32} & \num{  3567.0} \\
    BranchAll            & \num{  47} & \num{ 1032.61} & \num{  4549.3} \\
  \bottomrule
  \end{tabular*}
\end{table}

The code and generated instances are available at \url{https://github.com/dopt-TUDa/heatnet}.
We use developer version 10.0.2 of SCIP~\cite{SCIP10} with CPLEX
(12.10.0.0) as LP solver and IPOPT~\cite{IPOPT} (3.14.20) as NLP
solver. The experiments were run on a Linux cluster with 3.5 GHz Intel Xeon
E5-1620 Quad-Core CPUs, having 32~GB main memory. All computations were run
single-threaded.

We test the following settings, based on the methods and names
introduced in Section~\ref{sec:Implementation}. We use one setting for
applying only \texttt{Cycle}, \texttt{SupRet}, \texttt{MixSimpl},
\texttt{HydrHeur}, and \texttt{LPHeur} each. In addition, the
following combinations are tested:
\begin{itemize}[itemsep=0ex,parsep=0ex,label=$\circ$]
\item \texttt{Base}: None of the new methods is applied.
\item \texttt{Combined}: Apply all \texttt{Cycle}, \texttt{SupRet},
  \texttt{MixSimpl}, \texttt{HydrHeur}, and \texttt{LPHeur} methods.
\item \texttt{BranchTree}: \texttt{Combined} and additionally \texttt{BranchTree}.
\item \texttt{BranchAll}:  \texttt{Combined} and additionally \texttt{BranchAll}.
\end{itemize}

\subsection{Results for Generated Instances}

The results for a time limit of two hours for the generated instances are
given in Table~\ref{tab:All}. The columns provide the name of the variant,
the number of instances solved to optimality, the overall running time in
seconds and number of nodes of the branch-and-bound-tree in shifted
geometric mean\footnote{The \emph{shifted geometric mean} of values
  $t_1, \dots, t_n$ is $\big(\prod(t_i + s)\big)^{1/n} - s$ with shift
  $s = 1$ for time and $s = 100$ for branch-and-bound nodes.}.

The results show that all five individual methods improve upon
\texttt{Base} in the number of solved instances. All these methods
improve the solution time, except \texttt{LPHeur}, because for the easy
instances this heuristic uses too much time. Among these five
variants, \texttt{Cycle} has the largest impact, followed by
\texttt{HydrHeur}.

When applying all five methods in \texttt{Combined}, the impact is
even larger: The number of solved instances increases to 50 and thus
more than doubles this number in comparison to \texttt{Base}. The
running time decreases by about a factor of 0.4. Variant
\texttt{BranchTree} also solves the same number of instances as
\texttt{Combined}, but is \SI{3}{\percent} faster. However, variant
\texttt{BranchAll} solves less instances and is slower than
\texttt{Combined}.

One conclusion of these results seems to be that variants
\texttt{BranchTree} and \texttt{Combined} are the methods of choice,
depending on how many instances are tree-shaped.

In addition, we note that on average, equality of the flows of the supply
and return part is enforced for 72.1 arcs (if turned on). On average 6.3
cycle constraints are generated and 86.6 temperature mixing constraints can
be simplified after presolving of the flow directions (if turned on).

\subsection{Results for Real-World Networks}
\label{sec:ResultsReal-World}

\begin{table}[tb]
  \caption{Statistics on real-world networks.}
  \label{tab:RealStat}
  \centering
  \begin{tabular}{@{}lrrrrr@{}} 
    \toprule
    Network & $\card{\nodes}$ & $\card{\esupplier}$ & $\card{\edemand}$ & $\card{\epipe}$ & $\heatdemand[]$ $[kW]$ \\
    \midrule
    DaNo2020 & 85 & 2 & 13 & 81 & 6516.89 \\  
    DaNo2030 & 45 & 6 & 6 & 40 & 12009.30 \\
    TU2020   & 77 & 1 & 18 & 76 & 15248.22 \\
    TU2030   & 64 & 9 & 9 & 62 & 22226.64 \\
    \bottomrule
  \end{tabular}
\end{table}

\begin{table}
  \caption{Comparison of different algorithm variants.}
  \label{tab:Real}
  \begin{tabular*}{\textwidth}{@{}l@{\;\;\extracolsep{\fill}}rrrrr@{}}\toprule
    Instance & time [s] & \#nodes & dual bnd & primal bnd & gap \% \\
    \midrule
    \multicolumn{6}{@{}l@{}}{\texttt{Combined}}\\
    DaNo2020                          & \num{ 2955.59} & \num{   47161} & \num{  798600} & \num{  806075} &    0.936 \\
    DaNo2030                          & \num{18000.01} & \num{  170010} & \num{  470061} & \num{  610989} &   29.981 \\
    TU2020                            & \num{  293.85} & \num{    3246} & \num{  872947} & \num{  881668} &    0.999 \\
    TU2030                            & \num{18000.00} & \num{    2798} & \num{ 22226.7} & \num{  792400} & 3465.090 \\
    \addlinespace
    \multicolumn{6}{@{}l@{}}{\texttt{BranchAll}}\\
    DaNo2020                          & \num{ 3151.64} & \num{   51999} & \num{  798104} & \num{  806075} &    0.999 \\
    DaNo2030                          & \num{18000.01} & \num{  271390} & \num{  565362} & \num{  610989} &    8.070 \\
    TU2020                            & \num{  390.10} & \num{    4741} & \num{  873096} & \num{  881668} &    0.982 \\
    TU2030                            & \num{18000.00} & \num{   44110} & \num{  156420} & \num{  784883} &  401.778 \\
    \addlinespace
    \bottomrule
  \end{tabular*}
\end{table}

We next present results for real-world networks from the city of
Darmstadt. The underlying network data were provided by the local
energy supplier ENTEGA Plus GmbH and TU Darmstadt. The pipes of the
original data were contracted as described
in~\cite{BOTT21}. Additionally, the heat demands within districts are
aggregated. The original consumers are grouped by districts such that
each consumer in the resulting model represents a district.

We consider two settings. First, we assume the ``today'' scenario, a
centralized network structure with a combined heat and power plant
together with a gas boiler. Second, a ``future'' scenario, with a more
decentralized network structure. We assume that the existing network
expands, since more consumers are connected to the network, i.e., the
demanded heat increases, and additionally the number of heat suppliers
increases, due to, for example, the usage of industrial waste
heat. Note that for better tractability, the future network
structures are less precise than the today's networks, i.e., the
number of nodes, edges and consumers is reduced further.

Table~\ref{tab:RealStat} provides statistics for these networks, like the
number of nodes $\card{\nodes}$, suppliers $\card{\esupplier}$, demands
$\card{\edemand}$, and pipes $\card{\epipe}$. Additionally, the sum of all
demanded heat $\heatdemand[] \define \sum_{a \in \edemand} \heatdemand[a]$
is shown. The \emph{today} networks are denoted by \emph{2020} and the
\emph{future} networks by \emph{2030}.

The results of the variants \texttt{Combined} and \texttt{BranchAll}
are shown in Table~\ref{tab:Real} for a time limit of
\SI{18000}{\second} (\texttt{BranchTree} is not applicable, because
all networks contain cycles). The columns provide the time and number
of branch-and-bound-nodes in shifted geometric mean, the final dual
bound ($d$) and primal bound ($p$), and the gap $(p - d)/d$, in
percent. We use a gap limit of \num{0.01} (\SI{1}{\percent}), i.e., we
stop if the gap is below this value.

The results show that both variants solve instances \texttt{DaNo2020} and
\texttt{TU2020} (within the gap limit). Note that \texttt{BranchAll}
produces a significantly smaller gap for \texttt{DaNo2030} and
\texttt{TU2030}, but is slower for the other two (solved) instances. Both
variants compute the same primal solution bounds, except that
\texttt{BranchAll} improves the primal bound for \texttt{TU2030}. The
results show that depending on the complexity of the networks, the
corresponding optimization problems can be solved to global optimality
within reasonable time and quality.

\section{Heating Networks with Storages}
\label{sec:Storages}

In this section, we consider the presence of heat storages in a
quasi-stationary model, i.e., there are time steps
$T = \{1, \dots, \card{T}\}$, each with a stationary
behavior. These~$\card{T}$ problems are coupled through the fill
status of thermal storages. We will analyze conditions under which a
decomposition approach of the time steps allows for computing an
optimal solution.

\subsection{Modeling of Heat Storages}\label{sec:storages}

In this paper, we use a very simple storage model. We refer, e.g., to
\cite{Planungshandbuch} for a detailed description of the properties of
thermal energy storages and different technologies.

Thermal energy storages can charge heat and store it in a tank for a
period of time and release it when needed.  In general, different storage
technologies exist, but in district heating networks, mainly \emph{sensible
  energy storages} are used~\cite{Planungshandbuch}, in which no phase
change occurs, i.e., storage tanks with water must be under pressure if the
temperature is above \SI{100}{\celsius} to avoid vaporization. In this
paper, we consider short-term storages, since we deal with the
operation of heating networks over one day.

The simplest design for heat storages in district heating networks is the
so-called \emph{direct loading}, which we will also use. This means that in
contrast to indirect loading via heat exchangers, there is no hydraulic
separation of the storage and the connected pipes.

Analogously to the heating and demand arcs, the heat power of a storage
depends on the temperature difference, the specific heat capacity, and the
mass flow. Charging the storage corresponds to heating the tank, while
discharging corresponds to cooling the tank.

In general, the amount of stored energy is not equal to the amount of
energy that can be discharged, due to temperature losses over time. These
losses depend on the thermal insulation of the storage tank and
the difference to the ambient temperature; for more details we
refer to \cite{Planungshandbuch}.  Additionally, a storage has a capacity
for the amount of heat power that can be stored and a charging and
discharging capacity, which describes how quickly a storage can charge or
discharge a certain amount of energy. Again, for more details we refer to
\cite{Planungshandbuch}.

Since we are not interested in storage design or dimensioning, we
consider a simplified storage model in our optimization problem, in which
inner processes of a storage are not represented. In particular, we neglect the
charging and discharging capacity of the storage as well as cooling over
time. Thus, we assume a `perfect' storage, for which the amount of energy
that was charged equals the amount of energy that can be discharged.

\section{Time Dependent Optimization Model with Storages}
\label{sec:StorageOpt}

For the time dependent optimization problem, all variables from
Section~\ref{sec:OptModel} will be copied for each time step~$t$, and we add
$t$ to the indices of variables and parameters, e.g., $\mf[a,t]$ will be the flow on
arc~$a$ at time $t$ and similarly for $\mfpos[a,t]$, $\mfneg[a,t]$,
$\tend[a,t]$, and $\tstart[v,t]$. Then the storages will be included as
follows.

Let $\estorage \subset \edges$ be the set of storage (`capacitor') arcs in
the graph $\graph = (\nodes, \edges)$. We assume that, similarly to heating
arcs, each storage arc is always directed from the return to the supply
network part, i.e., $\estorage \subset \eret \times \esup$. In the
following, storage arcs are parallel to heating arcs. Note
that the flow direction of the storage arc depends on whether the storage
is charging ($\mf[a,t] < 0$) or discharging ($\mf[a,t] > 0$).

As mentioned before, in each time step $t \in T$, a heat storage can
either extract spare heat power from the network or feed the charged
heat into the network, which is equivalent to heating or cooling the
water within the storage. The heat power $\stpower[a,t]$ of a storage
arc $a = (u,v) \in \estorage$ is modeled analogously to the heat power
on demand and supplier arcs (see Constraints~\eqref{eq:tdemand},
\eqref{eq:theating} and also \cite{Planungshandbuch}):
\begin{align*}
  \stpower[a,t] = \heatcap\, \abs{\mf[a,t]}\, (\tstart[a,t] - \tend[a,t]), 
\end{align*}
where $\stpower[a,t] < 0$ means feeding energy into the network (discharging)
and $\stpower[a,t] > 0$ means charging the storage. Consequently, the output temperature of a
storage also depends on the flow $\mf[a,t]$ and the temperature
level of the storage has to be high enough to enable this.

Similar to the start temperature of pipe arcs, see~\eqref{eq:tstart}, the
start temperature $\tstart[a,t]$ of a storage arc $a = (u,v) \in \estorage$ can
be defined depending on the binary flow variables:
$\tstart[a,t] \define \tnode[u,t] \, \mfpos[a,t] + \tnode[v,t] \, \mfneg[a,t]$.
The binary variables are modeled as in Section~\ref{sec:FlowDirection}.

We assume that the storage has a maximal storage capacity $\stcap{a} > 0$
and that the storage is fully loaded in the first time step. The
requirement that a storage can only feed energy into the network that was
charged in previous time steps results in the following conditions:
\begin{align*}
  0 \leq \stcap{a} + \sum_{\ell=1}^t \stpower[a,\ell] \leq \stcap{a} \quad
  \forall a \in \estorage,\; t \in T.
\end{align*}
Here, $\sum_{\ell=1}^t \stpower[a,\ell]$ is the net amount of energy of the
storage~$a$ up to time~$t$. Together with the initial energy level of
$\stcap{a}$, it should not exceed the capacity and not become negative.

Furthermore, the storage has a pump that regulates the mass flow and
pressure. Its electric power $\pumppower[a,t]$ for storage
$a = (u,v) \in \estorage$ is modeled analogously to the pump of supplier
arcs in Section~\ref{sec:SupplierModel} with
\begin{align}\label{eq:ElectricPowerStorage}
  \pumppower[a,t] = \frac{1}{\pumpeff[a]\, \rho} \, (\pr[v,t] - \pr[u,t]) \, \mf[a,t].
\end{align}
Recall that $\rho$ is the density of water and $\pumpeff[a] \in [0,1]$ denotes the
efficiency of the pump. Note that during charging of the
storage~$a$, we have $\mf[a,t] < 0$. Moreover, we need $\pr[v,t] > \pr[u,t]$ for
the supply part to be able to transport the hot water towards the demand
arcs. Thus, in this case $\pumppower[a,t] < 0$. Because of flow
conservation, however, this will be compensated by the pump of a
corresponding parallel heating, which we assume to exist (see
Assumption~\ref{ass:Storage} below).

In addition, we require
\[
  \stpower[a,t]\, \mf[a,t] \leq 0,
\]
which means that flow from the return to the supply network corresponds to
a supply of heat to the network, while flow from the supply to the return
network corresponds to heat consumption.

Then the model from Section~\ref{sec:OptModel} is applied for each time
step and the constraints for the storage are added. Let
$\eleccost[a,t]$ be the costs for the electric power~$\pumppower[a,t]$
and $\heatcost[a,t]$ the costs for the heat~$\heatpower[a,t]$ at time
$t \in T$, while the heat power of the storage is free in every time step
$t \in T$. Thus, the objective function is:
\begin{align*}
  \sum_{t \in T} \Bigg( \sum_{a \in \esupplier} \big(\eleccost[a,t]\, \pumppower[a,t] - \heatcost[a,t]\, \heatpower[a,t] \big) + \sum_{a \in \estorage} \eleccost[a,t]\, \pumppower[a,t] \Bigg).
\end{align*}
A complete model is given below in Section~\ref{sec:DecompositionModels}
and the implemented version is given in Appendix~\ref{sec:ImplementedModelSto}.

\subsection{Decomposition Approach for the Parallel Case}
\label{sec:Decomposition}

Unfortunately, the coupling of the time steps through the storage makes
global optimization quite difficult, even for a moderate number of time
steps. We therefore investigate the following decomposition approach. It
first ignores the storages, which decomposes the problem into $\card{T}$
many subproblems as discussed in the first part of this paper.
Then the charging/discharging schedule of the storages is
computed. The overall scheme is:
\begin{enumerate}[itemsep=0ex,parsep=0ex,topsep=0.5ex]
\item Solve $\card{T}$ independent optimization problems without storages.
\item Determine an optimal storage schedule by distributing the previously
  computed energy between storages and heatings.
\end{enumerate}

\begin{figure}
  \centering
  \begin{subfigure}[b]{0.3\textwidth}
    \centering
    \begin{tikzpicture}[scale=0.7]
      \node at (0,3) [circle,draw, very thick,inner sep=0.3ex] (va){$s(a)$};
      \node at (3,3) [circle,draw, very thick] (vb){};
      \node at (0,0) [circle,draw, very thick,inner sep=0.3ex] (ra){$r(a)$};
      \node at (3,0) [circle,draw, very thick] (rb){};
      \draw[->, very thick] (va)-- node[above]{\footnotesize $S(a)$}(vb);
      \draw[dotted, very thick] (vb)-- node[above]{}(5,3);
      \draw[dotted, very thick] (5,0)-- node[above]{}(rb);
      \draw[->, very thick] (rb)-- node[above]{\footnotesize $R(a)$}(ra);
      \draw[->, very thick] (ra) to[out=70,in=-70] node[right]{\footnotesize $h(a)$}(va);
      \draw[->, very thick] (vb)-- node[right]{\footnotesize $\in \edemand$}(rb);
      \draw[<->, very thick] (ra) to[out=110,in=-110] node[left]{\footnotesize $a$}(va);
    \end{tikzpicture}
    \caption{Parallel storage.}
    \label{fig:mf_parallel} 
  \end{subfigure}
  \begin{subfigure}[b]{0.3\textwidth}
    \centering
    \begin{tikzpicture}[scale=0.7]
      \node at (0,3) [circle,draw, very thick,inner sep=0.3ex] (va){\footnotesize $\tnode[s(a)]$};
      \node at (3,3) [circle,draw, very thick] (vb){};
      \node at (0,0) [circle,draw, very thick,inner sep=0.3ex] (ra){\footnotesize $\tnode[r(a)]$};
      \node at (3,0) [circle,draw, very thick] (rb){};
      \draw[->, very thick] (va)-- node[above]{}(vb);
      \draw[dotted, very thick] (vb)-- node[above]{}(5,3);
      \draw[dotted, very thick] (5,0)-- node[above]{}(rb);%
      \draw[->, very thick] (rb)-- node[above]{\footnotesize $\tend[R(a)]$}(ra);%
      \draw[->, very thick] (ra) to[out=70,in=-70] node[above right]{\footnotesize $\tend[h(a)]$}(va);%
      \draw[->, very thick] (vb)-- node[right]{\footnotesize $\in \edemand$}(rb);%
      \draw[->, very thick] (ra) to[out=110,in=-110] node[below left=10pt and 2pt]{\footnotesize $\tstart[a]$}(va);
      \draw[->, very thick] (ra) to[out=110,in=-110] node[above left=2pt]{\footnotesize $\tend[a]$}(va);
      \draw[->, very thick, blue] (ra) to[out=110,in=-110] node[left=2pt]{}(va);
    \end{tikzpicture}
    \caption{Discharging ($\mf[a] \geq 0$).}
    \label{fig:tmix_discharge}
  \end{subfigure}
  \begin{subfigure}[b]{0.3\textwidth}
    \centering
    \begin{tikzpicture}[scale=0.7]
      \node at (0,3) [circle,draw, very thick,inner sep=0.3ex] (va){\footnotesize $\tnode[s(a)]$};
      \node at (3,3) [circle,draw, very thick] (vb){};
      \node at (0,0) [circle,draw, very thick,inner sep=0.3ex] (ra){\footnotesize $\tnode[r(a)]$};
      \node at (3,0) [circle,draw, very thick] (rb){};
      \draw[->, very thick] (va)-- node[above]{}(vb);
      \draw[dotted, very thick] (vb)-- node[above]{}(5,3);
      \draw[dotted, very thick] (5,0)-- node[above]{}(rb);%
      \draw[->, very thick] (rb)-- node[above]{\footnotesize $\tend[R(a)]$}(ra);%
      \draw[->, very thick] (ra) to[out=70,in=-70] node[above right]{\footnotesize $\tend[h(a)]$}(va);%
      \draw[->, very thick] (vb)-- node[right]{\footnotesize $\in \edemand$}(rb);%
      \draw[-] (ra) to[out=110,in=-110] node[above left=2pt]{\footnotesize $\tstart[a]$}(va);
      \draw[<-] (ra) to[out=110,in=-110] node[below left=10pt and 2pt]{\footnotesize $\tend[a]$}(va);
      \draw[<-, very thick, blue] (ra) to[out=110,in=-110] node[below=2pt]{}(va);
    \end{tikzpicture}
    \caption{Charging ($\mf[a] < 0$).}
    \label{fig:tmix_charge}
  \end{subfigure}
  \caption{Scheme of temperature mixing for a parallel storage arc at time
    step $t \in T$ (dropped from notation).}
  \label{fig:4_tmix_parallel}
\end{figure}

To analyze conditions under which this approach produces optimal solutions, we need several
assumptions, see Figure~\ref{fig:mf_parallel} for an illustration of the setting:
\begin{assumption}\label{ass:Storage}\
  \begin{enumerate}[i),topsep=0.5ex,itemsep=0ex,parsep=0ex,leftmargin=4.5ex]
  \item There is a constant $\pumpeff[]$ such that for all
    $a \in \esupplier \cup \estorage$:
    $\pumpeff[a] = \pumpeff[]$.\label{ass:Storage:eff}
  \item Each storage $a \in \estorage$ is parallel to a unique heating arc
    $h(a) \in \esupplier$ and connects node~$r(a)$ in the return part to
    node $s(a)$ in the supply part. Its generated heat is not restricted, e.g.,
    because $\powerbound[h(a)] = -\infty$.
    \label{ass:Storage:parallel}
  \item For each heating arc there is a unique parallel storage arc.\label{ass:Storage:pair}
  \item For each storage $a \in \estorage$, there is a unique arc $R(a)$
    connecting the return part to $r(a)$ and a unique arc $S(a)$ connecting
    $s(a)$ to the supply part, both with positive flow
    direction; see Figure~\ref{fig:mf_parallel}.\label{ass:Storage:connection}
  \item For each $t \in T$ and $a \in \estorage$, we have
    $\eleccost[a,t] = \eleccost[h(a),t]$, i.e., the costs of the heating and storage
    pump are equal.\label{ass:Storage:cost}
  \end{enumerate}
\end{assumption}

Condition~\ref{ass:Storage:eff} implies that the pumps of
heatings and storages are of the same type. Condition~\ref{ass:Storage:parallel} is
motivated by real-life heating networks in which often there is a
centralized common storage near the heat generation source. This can, for
example, arise from a combined heat and power plant coupled with a storage,
but also decentralized storage locations could be very
advantageous~\cite{decentralStorage}. Their supplied heat needs to be
unrestricted or at least large enough in order to be able to simultaneously
charge the storage and supply the network. Condition~\ref{ass:Storage:pair}
is needed for the proofs, but is without loss of generality, since one can
add a parallel storage of capacity~0 if needed.
Condition~\ref{ass:Storage:connection} is essentially without loss of
generality, because the heatings and storages are usually connected by
unique arcs to the return and supply part of the network. The positive flow
direction assumption poses the restriction that a storage can only
be charged from the corresponding heating, but not from the network,
because the latter would require a negative flow on $S(a)$; this condition
is needed for showing optimality of the approach. Note that a storage can
still discharge into the network. Condition~\ref{ass:Storage:cost} implies
that pump costs for the storage and heating are equal.

In conclusion, these assumptions are more or less realistic. In any case,
we need them to prove optimality of the decomposition approach below,
which, however, can often also be applied as a heuristic.

\subsection{Models of the Decomposition Approach}\label{sec:DecompositionModels}

The overall optimization problem can be written as follows:
\begin{subequations}\label{opt:optModel}
  \begin{align}
    \min \; & \mathmbox{\sum_{t \in T} \bigg( \sum_{a \in \esupplier}  \big( \eleccost[a,t]\, \pumppower[a,t] - \heatcost[a,t]\, \heatpower[a,t] \big) + \sum_{a \in \estorage} \eleccost[a,t]\, \pumppower[a,t]\bigg)} &&\\
  \text{s.t.} \;
  & \sum_{a \in \delta^{+}(v)} \mf[a,t] - \sum_{a \in \delta^-(v)} \mf[a,t] = 0 && \forall\, v \in \nodes, t \in T,\label{eq:optModelFirst}\\
  & \text{TM}_{v,t}(\mf[],\mfpos[],\mfneg[],\tend[],\tnode[v,t]) && \forall v \in \nodes, t \in T,\\
  & \pr[u,t] - \pr[v,t] - \zeta_a \, \mf[a,t] \abs{\mf[a,t]} + \rho g \, (\height[u] - \height[v]) = 0 && \forall\, a \in \epipe, t \in T,\\
  & \pr[u,t] - \pr[v,t] \geq \plossmin && \forall\, a = (u,v) \in \edemand, t \in T,\\
  & \pumppower[a,t] = \frac{1}{\pumpeff[a]\, \rho} \, (\pr[v,t] - \pr[u,t]) \, \mf[a,t] && \forall\, a = (u,v) \in \esupplier \cup \estorage, t \in T,\\
  & \abs{\mf[a,t]}\, \tend[a,t] = \abs{\mf[a,t]}\, \tstart[a,t] - \tfrac{\heatloss[a,t] \, \length[a]}{\heatcap} \,  (\tstart[a,t] - \tamb) && \forall\, a \in \epipe, t \in T,\\
  & \heatcap \, \mf[a,t] \, (\tnode[u,t] - \tfix) = \heatdemand[a,t] && \forall\, a = (u,v) \in \edemand, t \in T,\\
  & \heatpower[a,t] - \heatcap \, \mf[a,t] \, (\tnode[u,t] - \tend[a,t]) = 0 && \forall\, a = (u,v) \in \esupplier, t \in T,\\
  & \stpower[a,t] -\heatcap \abs{\mf[a,t]} (\tstart[a,t] - \tend[a,t] ) = 0 && \forall\, a \in \estorage, t \in T,\\
  & 0 \leq \stcap{a} + \sum_{\ell=1}^t \stpower[a,\ell] \leq \stcap{a} && \forall\, a \in \estorage, t \in T,\\
  & \stpower[a,t]\, \mf[a,t] \leq 0 && \forall\, a \in \estorage, t \in T,\\
  & \tstart[a,t] - \tnode[a,t] \, \mfpos[a,t] - \tnode[a,t] \, \mfneg[a,t] = 0 && \forall\, a \in \estorage, t \in T,\\
  & \mfpos[a,t] + \mfneg[a,t] = 1 && \forall\, a \in \epipe \cup \estorage, t \in T,\\
  & \pr[\bar{v},t] = \pfix && \forall\, t \in T,\\
  & -\mubound[a,t] \, \mfneg[a,t] \leq \mf[a,t] \leq \mubound[a,t] \, \mfpos[a,t] && \forall\, a \in \epipe \cup \estorage, t \in T,\\
  & \tnode[v,t] \geq \tamb && \forall\, v \in \nodes, t \in T,\\
  & \tend[a,t] \geq \tamb && \forall\, a \in \edges, t \in T,\\
  & \tend[a,t] = \tfix && \forall\, a \in \edemand, t \in T,\\
  & \tend[a,t] \leq \tubound{a} && \forall\, a \in \esupplier, t \in T,\\
  & \pumppower[a,t] \geq 0 && \forall\, a \in \esupplier, t \in T,\\
  & \heatpower[a,t] \leq 0 && \forall\, a \in \esupplier, t \in T,\\  
  & \mf[a,t] \geq 0 && \forall\, a \in \edemand \cup \esupplier, t \in T,\\
  & \mfpos[a,t],\; \mfneg[a,t] \in \{0,1\} && \forall\, a \in \epipe \cup \estorage, t \in T.\label{eq:optModelLast}
  \end{align}
\end{subequations}
Here, $\text{TM}_{v,t}$ is the time-indexed version of~\eqref{eq:TemperatureMixing}.

We then get the following two optimization models for the decomposition
approach. The first arises from~\eqref{opt:optModel} by removing (or fixing
to 0) all variables corresponding to some $a \in \estorage$:
\begin{align}\label{opt:decomp1}\tag{D$^\prime$}
  \min \Big\{ & \sum_{t \in T} \sum_{a \in \esupplier} \eleccost[a,t]\, \pumppower[a,t] - \heatcost[a,t]\,\heatpower[a,t] \suchthat
                \text{\eqref{eq:optModelFirst}--\eqref{eq:optModelLast}},\;
                \mf[a,t] = \stpower[a,t] = \pumppower[a,t] = 0 \;\forall\,
                a \in \estorage\Big\},
\end{align}
which is equivalent to minimizing the sum of the objectives over copies of
the original model. As indicated earlier, this problem decomposes into
$\card{T}$ independent problems.

Consider a feasible solution $\mathcal{D}'$ of~\eqref{opt:decomp1} and
denote its values of variables by $(\cdot)'$, e.g., $\mf[a,t]'$. We
then obtain the second model:
\begin{equation}\label{opt:decomp2}\tag{D$^{\prime\prime}$}
  \begin{aligned}
    \min \quad & \sum_{t \in T} \bigg( \sum_{a \in \esupplier} \big( \eleccost[a,t]\, \pumppower[a,t] - \heatcost[a,t]\, \heatpower[a,t] \big) \, + \sum_{a \in \estorage} \eleccost[a,t]\, \pumppower[a,t]\bigg)\\
    \text{s.t.} \quad & \stpower[a,t] = \heatcap\, \abs{\mf[a,t]}\, (\tstart[a,t] - \tend[a,t]) && \forall a \in \estorage, t \in T, \\
    & \heatpower[h(a),t] = \heatcap\, \mf[h(a),t] (\tstart[h(a),t] - \tend[h(a),t]) && \forall a \in \estorage, t \in T, \\
    & \pumppower[a,t] + \pumppower[h(a),t] = \tfrac{1}{\pumpeff[]\, \rho} (\pr[s(a),t]' - \pr[r(a),t]') \, (\mf[a,t] + \mf[h(a),t]) && \forall a \in \estorage, t \in T, \\ 
    & 0 \leq \stcap{a} + \sum_{\ell=1}^t \stpower[a,\ell] \leq \stcap{a} && \forall a \in \estorage, t \in T, \\
    & \stpower[a,t]\, \mf[a,t] \leq 0 && \forall a \in \estorage, t \in T, \\
    & \eleccost[a,t] (\pumppower[a,t] + \pumppower[h(a),t]) - \heatcost[h(a),t] (\stpower[a,t] + \heatpower[h(a),t])\\
    & \qquad = \eleccost[h(a),t]\, \pumppower[h(a),t]' - \heatcost[h(a),t]\, \heatpower[h(a),t]' && \forall a \in \estorage, t \in T, \\
    & \mf[a,t] + \mf[h(a),t] = \mf[h(a),t]' && \forall a \in \estorage, t \in T,\\
    & \text{TM}_{v,t}(\mf[],\mfpos[],\mfneg[],\tend[],\tnode[v,t]) && \forall v \in \{r(a),s(a)\}, t \in T,\\
    & \tstart[a,t] \define \tnode[u,t] \, \mfpos[a,t] + \tnode[v,t] \, \mfneg[a,t],&& \forall a = (u,v) \in \estorage, t \in T,\\
    & \mfpos[a,t] + \mfneg[a,t] = 1,&& \forall a = (u,v) \in \estorage, t \in T,\\
    & -\mubound[a] \, \mfneg[a,t] \leq \mf[a,t] \leq \mubound[a] \, \mfpos[a,t],&&\forall a = (u,v) \in \estorage, t \in T,\\
    & \mfpos[a,t],\; \mfneg[a,t] \in \{0,1\}&&\forall a = (u,v) \in \estorage, t \in T.
  \end{aligned}
\end{equation}

Note that using~\eqref{eq:prpump}, \eqref{eq:ElectricPowerStorage} and
Assumption~\ref{ass:Storage}, the electric power for every storage
$a \in \estorage$ and time step $t \in T$ can be reformulated to
$\pumppower[a,t] + \pumppower[h(a),t] = \frac{1}{\pumpeff[]\, \rho}
(\pr[s(a),t] - \pr[r(a),t]) \, (\mf[a,t] + \mf[h(a),t])$, which is used in
the formulation of~\eqref{opt:decomp2}, Moreover, we remark that we couple
the heat energy of the storage over the time periods, but not the
temperature of the storage.


\subsection{Optimality of Decomposition Approach}\label{sec:DecompositionOptimality}

We will show that first solving \eqref{opt:decomp1} and then
\eqref{opt:decomp2} leads to an optimal solution for
Problem~\eqref{opt:optModel}, under Assumption~\ref{ass:Storage}. To this
end, we show that we get a feasible solution of
Problem~\eqref{opt:optModel} if we have feasible solutions for
\eqref{opt:decomp1} and \eqref{opt:decomp2}.  Then we show that every
feasible solution of~\eqref{opt:optModel} can be transferred to solutions
of \eqref{opt:decomp1} and \eqref{opt:decomp2} of the same value. Thus,
there does not exist a better solution for Problem~\eqref{opt:optModel}.

When we speak about solutions $\mathcal{D}'$ of~\eqref{opt:decomp1}, this refers to the following set of variables:
\[
  (\mf[a,t], \mfpos[a,t], \mfneg[a,t], \pr[u,t], \pr[v,t], \tnode[u,t], \tnode[v,t], \tstart[a,t], \tend[a,t], \pumppower[a,t], \heatpower[a,t])_{a = (u,v) \in \edges \setminus \estorage, t \in T}
\]
and a solution $\mathcal{D}''$ of~\eqref{opt:decomp2} uses the following variables
\[
  (\mf[a,t], \mfpos[a,t], \mfneg[a,t], \tnode[u,t], \tnode[v,t], \tstart[a,t], \tend[a,t], \pumppower[a,t], \heatpower[a,t],\stpower[a,t])_{a = (u,v) \in \esupplier, t \in T}
  \cup (\pumppower[a,t], \stpower[a,t])_{a \in \estorage, t \in T}.
\]

\begin{lemma}\label{lem:feasibilityParallel}
  Let $\mathcal{D}'$ and $\mathcal{D}''$ be feasible solutions
  of~\eqref{opt:decomp1} and~\eqref{opt:decomp2}, respectively, where
  $\mathcal{D}'$ is used as input to~\eqref{opt:decomp2}.  Then the
  solution $\mathcal{D}' \cup \mathcal{D}''$, where the values of
  $\mathcal{D}''$ overwrite those of~$\mathcal{D}'$, is a feasible
  solution of~\eqref{opt:optModel}.
\end{lemma}

\begin{proof}
  In $\mathcal{D}' \cup \mathcal{D}''$ all variables
  of~\eqref{opt:optModel} receive a value. The only variables
  appearing both in $\mathcal{D}'$ and $\mathcal{D}''$ are those
  incident to heating arcs $a \in \esupplier$. Except for these
  variables, the solutions are compatible with each other by
  construction of Problem~\eqref{opt:decomp2}; here, the constraints
  are satisfied via~\eqref{opt:decomp1}.  Moreover, the variables
  incident to heating arcs satisfy the constraints, because the values
  of $\mathcal{D}''$ are used.
\end{proof}

We now show that if we have a feasible solution of~\eqref{opt:optModel}, we
can construct a feasible solution for~\eqref{opt:decomp1} with a particular
structure.

\begin{lemma}\label{lem:SolutionTransferD1}
  Let $\mathcal{D}^\star$ be a feasible solution
  for~\eqref{opt:optModel}. Then under Assumption~\ref{ass:Storage}, there
  exists a feasible solution $\mathcal{D}'$ of~\eqref{opt:decomp1} such
  that
  \[
    \pumppower[h(a),t]' = \pumppower[a,t]^\star + \pumppower[h(a),t]^\star,\quad
    \heatpower[h(a),t]' = \stpower[a,t]^\star + \heatpower[h(a),t]^\star\quad
    \forall a \in \estorage, t \in T,
  \]
  where `$\mbox{\;}^\star$' and `$\mbox{\;}'$' indicate values of
  $\mathcal{D}^\star$ and $\mathcal{D}'$, respectively.
\end{lemma}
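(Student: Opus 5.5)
The plan is to build $\mathcal{D}'$ from $\mathcal{D}^\star$ by merging each storage arc $a$ into its parallel heating arc $h(a)$, time step by time step, and to leave everything away from the pair $(a,h(a))$ unchanged. Concretely, for every $t \in T$ and $a \in \estorage$ we set $\mf[a,t]' = \stpower[a,t]' = \pumppower[a,t]' = 0$ and $\mf[h(a),t]' = \mf[a,t]^\star + \mf[h(a),t]^\star$. All other variables keep their values from $\mathcal{D}^\star$: pressures, node temperatures, flows and directions on pipes and demand arcs, and pipe temperatures. We then define $\heatpower[h(a),t]' \define \stpower[a,t]^\star + \heatpower[h(a),t]^\star$ and $\pumppower[h(a),t]' \define \pumppower[a,t]^\star + \pumppower[h(a),t]^\star$. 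The end temperature $\tend[h(a),t]'$ is the unique value making the heating equation hold, i.e.\ $\heatpower[h(a),t]' = \heatcap\,\mf[h(a),t]'(\tnode[r(a),t]^\star - \tend[h(a),t]')$. It remains to verify the constraints of \eqref{opt:optModel} with storage variables fixed to zero.

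Flow conservation at $r(a)$ and $s(a)$ holds because only the sum $\mf[a,t]+\mf[h(a),t]$ enters there. The pump equation follows from Assumption~\ref{ass:Storage}\ref{ass:Storage:eff}: both arcs join the same nodes $r(a)$ and $s(a)$, and pressures are unchanged, so
\[
\pumppower[a,t]^\star + \pumppower[h(a),t]^\star = \tfrac{1}{\pumpeff[]\rho}(\pr[s(a),t]^\star - \pr[r(a),t]^\star)(\mf[a,t]^\star + \mf[h(a),t]^\star).
\]
All pipe, demand and pressure-drop constraints involve only unchanged variables. The remaining conditions are nonnegativity, sign and bound constraints on the new heating values, together with the temperature mixing constraints at $r(a)$ and $s(a)$. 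The bound $\powerbound[h(a)] = -\infty$ from Assumption~\ref{ass:Storage}\ref{ass:Storage:parallel} removes the lower bound on $\heatpower[h(a),t]'$.

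The key step is the mixing at $s(a)$, done by an energy balance. By Assumption~\ref{ass:Storage}\ref{ass:Storage:connection}, the inflow to $s(a)$ consists only of $h(a)$ and, when discharging, $a$; the outflow is $S(a)$, and possibly $a$ when charging. In the discharging case ($\mf[a,t]^\star \ge 0$) the old mixing reads
\[
\tnode[s(a)]\,(\mf[a]+\mf[h(a)]) = \mf[a]\tend[a] + \mf[h(a)]\tend[h(a)].
\]
Eliminating $\tend$ through the heat equations with start temperature $\tnode[r(a)]$ gives $\tnode[s(a)]\mf[h(a)]' = \mf[h(a)]'\tnode[r(a)] - (\stpower[a]+\heatpower[h(a)])/\heatcap$, so the merged arc with our $\tend[h(a)]'$ reproduces exactly the same $\tnode[s(a)]$. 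In other words, $\tend[h(a),t]' = \tnode[s(a),t]^\star$ satisfies the mixing constraint. In the charging case the hot water leaving $s(a)$ through $a$ returns to $r(a)$ at temperature $\tend[a]$, so both $r(a)$ and $s(a)$ need care. At $s(a)$, we have $\tnode[s(a)] = \tend[h(a)]$ and $\mf[S(a)] = \mf[h(a)]+\mf[a]$. At $r(a)$, the inflow now mixes $R(a)$ with the returning storage water; after merging, the only inflow is $R(a)$, so $\tnode[r(a)]$ changes to $\tend[R(a)]$. To handle this I would choose $\tnode[r(a),t]' \define \tend[R(a),t]^\star$. I would then check, again via the energy balance and the charging heat formula $\stpower[a] = \heatcap\abs{\mf[a]}(\tnode[s(a)] - \tend[a])$, that $\heatcap\,\mf[h(a)]'(\tnode[r(a)]' - \tnode[s(a)]^\star) = \stpower[a]^\star + \heatpower[h(a)]^\star$. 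This is a direct energy-conservation identity for the loop $r(a)\to s(a)\to r(a)$. We keep $\tend[h(a)]' = \tnode[s(a)]^\star$, so all downstream temperatures are unchanged.

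Finally, the sign and bound constraints need checking. We need $\tend[h(a)]' = \tnode[s(a)]^\star \le \tubound{h(a)}$. This is automatic if $\mf[h(a)]^\star>0$, since then $\tnode[s(a)]^\star$ is a convex combination including $\tend[h(a)]^\star$. In the degenerate discharging case it requires an argument, or the natural understanding that the storage output obeys the same temperature bound; I expect this case distinction, and the sign conditions $\heatpower' \le 0$ and $\pumppower' \ge 0$, to be the main obstacle. For $\pumppower' \ge 0$ I would use $\pr[s(a)] > \pr[r(a)]$ and $\mf[h(a)]' = \mf[S(a)] \ge 0$. For $\heatpower' \le 0$ I would use $\tnode[s(a)] \ge \tnode[r(a)]'$, which follows from $\stpower\mf \le 0$ and $\heatpower^\star \le 0$. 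The stated identities for $\pumppower'$ and $\heatpower'$ then hold by construction.
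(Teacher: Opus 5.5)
Your construction and your verification of the equality constraints match the paper's proof. You merge $\mf[a]^\star$ into $h(a)$, keep the pressures, and set $\tnode[r(a)]' = \tendstar[R(a)]$ and $\tend[h(a)]' = \tnode[s(a)]^\star$. You get the pump equation from Assumption~\ref{ass:Storage}\ref{ass:Storage:eff}, and the heating equation from the mixing equation at $s(a)$ (discharging) or $r(a)$ (charging). The charging-case identity, which you only announce, is exactly the paper's computation: flow conservation $\mf[R]^\star = \mf[a]^\star + \mf[h(a)]^\star$ and the mixing equation at $r(a)$ cancel the terms $\mf[a]^\star \tendstar[a]$, leaving $\heatcap\,\mf[R]^\star(\tendstar[R] - \tnode[s(a)]^\star)$.

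Your last paragraph goes beyond the paper, whose proof never checks the inequality constraints of \eqref{opt:decomp1} on the merged arc $h(a)$. Two of your claims there are false as stated.

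\emph{Temperature bound.} You claim $\tnode[s(a)]^\star \le \tubound{h(a)}$ is automatic whenever $\mf[h(a)]^\star > 0$. This fails in the discharging case. There $\tnode[s(a)]^\star$ is a convex combination of $\tendstar[h(a)]$ and $\tendstar[a]$, but \eqref{opt:optModel} puts no upper bound on the storage outlet temperature. You only know $\tendstar[a] \ge \tnode[r(a)]^\star$, from $\stpower[a]^\star \le 0$. So the bound can fail whenever $\mf[a]^\star > 0$, not just in the degenerate case. It holds automatically only when charging, where $\tnode[s(a)]^\star = \tendstar[h(a)]$.

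\emph{Sign of the merged heat.} $\heatpower[h(a)]' \le 0$ is immediate when discharging, since both summands are nonpositive. When charging, however, $\stpower[a]^\star \ge 0$. From $\heatpower[h(a)]^\star \le 0$ and $\stpower[a]^\star \ge 0$ you only get $\tnode[r(a)]^\star \le \tnode[s(a)]^\star$ and $\tendstar[a] \le \tnode[s(a)]^\star$. But $\heatpower[h(a)]'$ involves the unmixed $\tendstar[R]$, while $\tnode[r(a)]^\star$ is a mixture of $\tendstar[R]$ and $\tendstar[a]$. A cold storage outflow therefore allows $\tendstar[R] > \tnode[s(a)]^\star$, i.e.\ $\heatpower[h(a)]' > 0$.

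Neither condition follows from the arguments you give. Both need an additional assumption, for example that the storage outlet respects $\tubound{h(a)}$ and that charging does not make the return inflow hotter than $\tnode[s(a)]^\star$; the paper's proof does not address them either.
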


\begin{proof}
  We consider a fixed time step $t \in T$ and skip the index $t$ in the
  following. Moreover, we fix a storage arc $a \in \estorage$. We then
  abbreviate $r \define r(a)$, $s \define s(a)$, and $R \define R(a)$.

  The solution
  $\mathcal{D}' = (\mf[a]', \mfposprime[a], \mfnegprime[a], \pr[u]',
  \pr[v]', \tnode[u]', \tnode[v]', \tstartprime[a],
  \tendprime[a], \pumppower[a]', \heatpower[a]')_{a = (u,v) \in \edges
    \setminus \estorage}$ for~\eqref{opt:decomp1} is constructed
  such that all variables are copied from the given solution
  $\mathcal{D}^\star$, except the ones for $a$, $h(a)$, $r$, and $s$, where
  we define
  \begin{align*}
    & \pumppower[h(a)]' \define \pumppower[a]^\star + \pumppower[h(a)]^\star, \quad
    \heatpower[h(a)]' \define \stpower[a]^\star + \heatpower[h(a)]^\star, \quad
    \mf[h(a)]' \define \mf[a]^\star + \mf[h(a)]^\star,\\
    & \tnode[s]'\define \tnode[s]^\star,\quad \tnode[r]' \define \tendstar[R],\quad \tstartprime[h(a)] = \tnode[r]',
      \quad \tendprime[h(a)] \define \tnode[s]^\star,\quad \pr[r]' \define \pr[r]^\star, \quad \pr[s]' \define \pr[s]^\star.
  \end{align*}
  Note that we use `unboundedness' of the supplied heat of $h(a)$
  (Assumption~\ref{ass:Storage}~\ref{ass:Storage:parallel}).

  Since~\eqref{opt:decomp1} contains the same constraints
  as~\eqref{opt:optModel}, feasibility of all constraints away from~$a$ is
  clear. We now consider the constraints around $a$.

  Note that flow conservation is fulfilled by definition, since it is
  satisfied for $\mathcal{D}^\star$ and
  $\mf[h(a)]' = \mf[a]^\star + \mf[h(a)]^\star$ holds. Moreover, because
  there is exactly one unique incoming and outgoing
  arc by Assumption~\ref{ass:Storage}~\ref{ass:Storage:connection}, no
  temperature mixing at $r$ and $s$ occurs. Thus, the definition of the
  temperatures in $\mathcal{D}'$ is feasible.

  The next step is to show that the equations for $\heatpower[h(a)]'$ and
  $\pumppower[h(a)]'$ in~\eqref{opt:decomp1} are satisfied.  For the
  electric power $\pumppower[h(a)]'$, we obtain by feasibility of
  $\mathcal{D}^\star$, Assumption~\ref{ass:Storage}~\ref{ass:Storage:eff},
  and $\pr[r]' = \pr[r]^\star$, $\pr[s]' = \pr[s]^\star$:
  \begin{align*}
    \pumppower[h(a)]' & = \pumppower[a]^\star + \pumppower[h(a)]^\star
    = \tfrac{1}{\pumpeff[] \rho} (\pr[s]^\star - \pr[r]^\star) \, \mf[a]^\star + \tfrac{1}{\pumpeff[] \rho} (\pr[s]^\star - \pr[r]^\star) \, \mf[h(a)]^\star \\
                     & = \tfrac{1}{\pumpeff[] \rho} (\pr[s]' - \pr[r]') \, (\mf[a]^\star + \mf[h(a)]^\star)
                     = \tfrac{1}{\pumpeff[] \rho} (\pr[s]' - \pr[r]') \, \mf[h(a)]',
  \end{align*} 
  which shows that the equation for $\pumppower[h(a)]'$ is fulfilled.

  For the constraints on $\heatpower[h(a)]'$, we differentiate between a
  charging and a discharging storage in the given solution
  of~\eqref{opt:optModel}. In Figure~\ref{fig:tmix_discharge}, the
  temperature variables for a discharging storage, i.e.,
  $\stpower[a]^\star \leq 0$ and therefore $\mf[a]^\star \geq 0$, are shown. The
  mass flow directions are indicated by the arrows. Figure~\ref{fig:tmix_charge}
  shows the setting for a charging storage, i.e., $\stpower[a]^\star > 0$
  and therefore $\mf[a]^\star < 0$.  If the storage is charged, temperature
  mixing occurs at node $r$, while if the storage is discharged
  temperature mixing occurs at $s$. Thus, the solution $\mathcal{D}^\star$
  satisfies:
  \begin{align}
    \tnode[s]^\star =
    \frac{\mfposstar[a]\, \tendstar[a]\, \mf[a]^\star + \tendstar[h(a)]\, \mf[h(a)]^\star}{\mfposstar[a]\, \mf[a]^\star + \mf[h(a)]^\star},
    \qquad
    \tnode[r]^\star =
    \frac{\tendstar[R]\, \mf[R]^\star - \mfnegstar[a]\, \tendstar[a]\, \mf[a]^\star}{\mf[R]^\star - \mfnegstar[a]\, \mf[a]^\star}.
    \label{eq:tmix}
  \end{align}
  Moreover, we have
  \begin{align*}
    \heatpower[h(a)]^\star = \heatcap\, \mf[h(a)]^\star (\tstartstar[h(a)] - \tendstar[h(a)] ),\quad
    \stpower[a]^\star = \heatcap\, \abs{\mf[a]^\star} (\tstartstar[a] - \tendstar[a]).
  \end{align*}

  For $\mf[a]^\star \geq 0$, we can assume w.l.o.g.\ that
  $\mfposstar[a] = 1$, $\mfnegstar[a] = 0$. Thus,
  $\tnode[r]^\star = \tendstar[R] = \tnode[r]'$ and
  \begin{align*}
    \tfrac{1}{\heatcap} \heatpower[h(a)]' & = \tfrac{1}{\heatcap} \heatpower[h(a)]^\star + \tfrac{1}{\heatcap} \stpower[a]^\star\\
                     & =  \mf[h(a)]^\star (\tstartstar[h(a)] - \tendstar[h(a)]) +  \abs{\mf[a]^\star} (\tstartstar[a] - \tendstar[a])\\
                     & =  \mf[h(a)]^\star (\tnode[r]^\star - \tendstar[h(a)]) +  \mf[a]^\star (\tnode[r]^\star - \tendstar[a])\\
                     & =  (\mf[h(a)]^\star + \mf[a]^\star) \tnode[r]^\star -  (\mf[h(a)]^\star \tendstar[h(a)] + \mf[a]^\star \tendstar[a])\\
                     & \overset{\eqref{eq:tmix}}{=}  (\mf[h(a)]^\star + \mf[a]^\star) \tnode[r]^\star -  (\mf[h(a)]^\star + \mf[a]^\star) \tnode[s]^\star\\
                     & =  (\mf[h(a)]^\star + \mf[a]^\star) (\tnode[r]^\star - \tnode[s]^\star)\\
                     & \overset{\tnode[s]^\star = \tendprime[h(a)]}{=} \mf[h(a)]' (\tnode[r]' - \tendprime[h(a)]).
  \end{align*}
  Thus, the constraint for $\heatpower[h(a)]'$ is fulfilled.

  For $\mf[a]^\star < 0$ ($\mfposstar[a] = 0$, $\mfnegstar[a] = 1$),
  we have $\tnode[s]^\star = \tendstar[h(a)]$ because
  of~\eqref{eq:tmix}. Then using flow conservation (FC)
  $\mf[R]^\star = \mf[h(a)]^\star + \mf[a]^\star$ at $r$ yields:
  \begin{align*}
    \tfrac{1}{\heatcap} \heatpower[h(a)]' & = \tfrac{1}{\heatcap} \heatpower[h(a)]^\star + \tfrac{1}{\heatcap} \stpower[a]^\star\\
                     & =  \mf[h(a)]^\star (\tstartstar[h(a)] - \tendstar[h(a)]) +  \abs{\mf[a]^\star} (\tstartstar[a] - \tendstar[a])\\
                     & \overset{\mf[a]^\star \leq 0}{=}  \mf[h(a)]^\star (\tnode[r]^\star - \tendstar[h(a)]) -  \mf[a]^\star (\tnode[s]^\star - \tendstar[a])\\
                     & \overset{\tendstar[h(a)] = \tnode[s]^\star}{=} \mf[h(a)]^\star (\tnode[r]^\star - \tnode[s]^\star) -  \mf[a]^\star (\tnode[s]^\star - \tendstar[a])\\
                     & = \mf[h(a)]^\star \tnode[r]^\star - (\mf[h(a)]^\star + \mf[a]^\star) \tnode[s]^\star + \mf[a]^\star \tendstar[a]\\
                     & \overset{\text{FC}}{=}  (\mf[R]^\star - \mf[a]^\star) \tnode[r]^\star - \mf[R]^\star \tnode[s]^\star + \mf[a]^\star \tendstar[a]\\
                     & \overset{\eqref{eq:tmix}}{=} (\tendstar[R]\, \mf[R]^\star - \tendstar[a]\, \mf[a]^\star) - \mf[R]^\star \tnode[s]^\star + \mf[a]^\star \tendstar[a]\\
                     & = \mf[R]^\star (\tendstar[R] - \tnode[s]^\star)\\
                     & \overset{\text{FC}}{=} (\mf[a]^\star + \mf[h(a)]^\star) (\tendstar[R] - \tnode[s]^\star)\\
                     & = \mf[h(a)]' (\tnode[r]' - \tendprime[h(a)]).
  \end{align*}
  Thus, again the constraint for $\heatpower[h(a)]'$ is fulfilled.
\end{proof}

\begin{remark}
  Using Assumption~\ref{ass:Storage}~\ref{ass:Storage:cost}, the solution $\mathcal{D}'$
  constructed in Lemma~\ref{lem:SolutionTransferD1} has an objective
  value of
  \begin{align*}
    & \sum_{t \in T} \sum_{a \in \esupplier} \big( \eleccost[a,t]\, \pumppower[a,t]' - \heatcost[a,t]\, \heatpower[a,t]' \big)\\
    = & \sum_{t \in T} \sum_{a \in \estorage} \eleccost[a,t] (\pumppower[a,t]^\star + \pumppower[h(a),t]^\star) - \heatcost[h(a),t] (\stpower[a,t]^\star + \heatpower[h(a),t]^\star)\\
    = & \sum_{t \in T} \bigg( \sum_{a \in \esupplier} (\eleccost[a,t]\, \pumppower[a,t]^\star - \heatcost[a,t]\, \heatpower[a,t]^\star) + \sum_{a \in \estorage} \eleccost[a,t]\, \pumppower[a,t]^\star\bigg) - \sum_{t \in T} \sum_{a \in \estorage} \heatcost[h(a),t]\, \stpower[a,t]^\star,
  \end{align*}
  which is the objective of the given solution $\mathcal{D}^\star$
  for~\eqref{opt:optModel} plus the storage energy net cost
  $- \sum_{t \in T} \sum_{a \in \estorage} \heatcost[h(a),t]\, \stpower[a,t]^\star$ (recall that
  $\stpower[a,t]^\star$ is negative when charging).
\end{remark}

\begin{lemma}\label{lem:SolutionTransferD2}
  Let Assumption~\ref{ass:Storage} hold and let $\mathcal{D}^\star$ be a
  feasible solution for~\eqref{opt:optModel}. Let $\mathcal{D}'$ be any
  feasible solution of~\eqref{opt:decomp1}. Using $\mathcal{D}'$ as input,
  there exists a feasible solution $\mathcal{D}''$ for~\eqref{opt:decomp2}
  which satisfies:
  \begin{equation}\label{eq:SolutionTransferD2}
    \stpower[a,t]'' = \stpower[a,t]^\star,\quad
    \heatpower[h(a),t]'' = \heatpower[h(a),t]' - \stpower[a,t]^\star,\quad
    \pumppower[a,t]'' + \pumppower[h(a),t]'' = \pumppower[h(a)]'
    \quad \forall a \in \estorage, t \in T.
  \end{equation}
  Here, the superscripts `$\mbox{\;}^\star$', `$\mbox{\;}^\prime$', and
  `$\mbox{\;}^{\prime\prime}$' mark values of $\mathcal{D}^\star$,
  $\mathcal{D}'$, and $\mathcal{D}''$, respectively.
\end{lemma}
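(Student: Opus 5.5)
We construct $\mathcal{D}''$ explicitly, one storage arc $a \in \estorage$ and one time step $t$ at a time (dropping $t$ where harmless). The storage schedule is copied from $\mathcal{D}^\star$:
\[
\mf[a]'' \define \mf[a]^\star,\quad \mfposprime[a]{}' \text{ and } \mfnegprime[a]{}' \text{ as in } \mathcal{D}^\star,\quad \stpower[a]'' \define \stpower[a]^\star .
\]
The heating flow is chosen to satisfy the coupling constraint, $\mf[h(a)]'' \define \mf[h(a)]' - \mf[a]^\star$. We then choose the node temperatures $\tnode[r]''$, $\tnode[s]''$, the storage temperatures $\tstart[a]''$, $\tend[a]''$, and $\tend[h(a)]''$ so that both power balance equations hold. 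The heat balance is $\heatpower[h(a)]'' = \heatpower[h(a)]' - \stpower[a]^\star$, and the pump powers are split as $\pumppower[a]'' \define \tfrac{1}{\pumpeff[]\rho}(\pr[s]'-\pr[r]')\mf[a]^\star$, $\pumppower[h(a)]'' \define \pumppower[h(a)]' - \pumppower[a]''$. With these choices the third constraint of \eqref{opt:decomp2} is immediate from the feasibility of $\mathcal{D}'$ for \eqref{eq:prpump}. The cost-equality constraint follows from \eqref{eq:SolutionTransferD2} together with Assumption~\ref{ass:Storage}~\ref{ass:Storage:cost}. The capacity constraints and $\stpower[a,t]\mf[a,t]\le 0$ carry over verbatim from $\mathcal{D}^\star$ because $\stpower[a]''=\stpower[a]^\star$ and $\mf[a]''=\mf[a]^\star$.

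The remaining work is the temperature part. We fix $\tnode[r]'' \define \tnode[r]'$ (the inflow temperature from $R(a)$, by Assumption~\ref{ass:Storage}~\ref{ass:Storage:connection}) and $\tnode[s]'' \define \tnode[s]'$, and split into the two cases used in Lemma~\ref{lem:SolutionTransferD1}.

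\textbf{Discharging case ($\mf[a]^\star\ge 0$).} Mixing occurs at $s$. We set $\tstart[a]'' \define \tnode[r]'$ and choose $\tend[a]''$ from $\stpower[a]^\star = \heatcap\,\mf[a]^\star(\tnode[r]'-\tend[a]'')$; if $\mf[a]^\star=0$ then $\stpower[a]^\star=0$ and any value works. We then choose $\tend[h(a)]''$ so that the mixing equation at $s$ reproduces $\tnode[s]'$. Reversing the telescoping computation of Lemma~\ref{lem:SolutionTransferD1} should then show that
\[
\heatcap\,\mf[h(a)]''\,(\tnode[r]'-\tend[h(a)]'') = \heatcap\,\mf[h(a)]'(\tnode[r]'-\tnode[s]') - \stpower[a]^\star = \heatpower[h(a)]' - \stpower[a]^\star .
\]

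\textbf{Charging case ($\mf[a]^\star<0$).} We set $\tend[h(a)]'' \define \tnode[s]'$, so there is no mixing at $s$, and $\tstart[a]'' \define \tnode[s]'$. We choose $\tend[a]''$ from the storage power equation and keep $\tnode[r]''=\tnode[r]'$. Since $\tnode[r]$ now mixes the inflows from $R(a)$ and the storage, we must check that the mixing equation at $r$ holds with this choice. Using flow conservation $\mf[R]=\mf[h(a)]''+\mf[a]''$, this reduces to the heat balance for $h(a)$, exactly mirroring the second computation in Lemma~\ref{lem:SolutionTransferD1}.

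The main obstacle is that these temperatures are defined by solving linear equations whose coefficients ($\mf[a]^\star$ and $\mf[h(a)]''$) may vanish or become negative. We also need the heating flow to satisfy $\mf[h(a)]'' \ge 0$, and the resulting temperatures must obey the sign conventions and the bound $\tend \ge \tamb$. If $\mf[h(a)]' < \mf[a]^\star$ in the discharging case, then $\mf[h(a)]''$ is negative. We would handle this by noting that \eqref{opt:decomp2} only imposes the listed constraints, which contain no sign constraint on $\mf[h(a)]$ and no bound $\tend[h(a)]\le\tubound{h(a)}$; the unbounded heat of Assumption~\ref{ass:Storage}~\ref{ass:Storage:parallel} absorbs any temperature choice. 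If that fails, we would instead rescale $\mf[a]''$ within the admissible sign and adjust $\tend[a]''$ so that $\stpower[a]''=\stpower[a]^\star$ is still attained. This step is where we expect the most care to be needed.
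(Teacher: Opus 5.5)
Your discharging case with $0<\mf[a]^\star<\mf[h(a)]'$ is fine and agrees with the paper; you define $\tendprimeprime[h(a)]$ from the mixing at $s$ and then verify the heat balance, while the paper does the reverse. The pump split and the cost, capacity and sign constraints are also fine. The charging case, however, fails.

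There you pin every temperature: $\tnode[r]''=\tnode[r]'$, $\tstartprimeprime[a]=\tendprimeprime[h(a)]=\tnode[s]'$, and $\tendprimeprime[a]$ from the storage power equation. In $\mathcal{D}'$ there is no mixing at $r$, so $\tendprime[R]=\tnode[r]'$. Hence the mixing equation at $r$ with $\tnode[r]''=\tnode[r]'$ forces $\tendprimeprime[a]=\tnode[r]'$. Together with the storage power equation this means
\[
  \stpower[a]^\star=\heatcap\,\abs{\mf[a]^\star}\,(\tnode[s]'-\tnode[r]').
\]
The heat balance of $h(a)$ with start temperature $\tnode[r]'$, end temperature $\tnode[s]'$ and flow $\mf[h(a)]'+\abs{\mf[a]^\star}$ reduces, via $\heatpower[h(a)]'=\heatcap\,\mf[h(a)]'(\tnode[r]'-\tnode[s]')$, to exactly the same condition. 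So your ``reduction'' is an equivalence between two conditions that are in general both false; no free variable is left to absorb $\stpower[a]^\star$. For example, $\mf[h(a)]'=10$, $\tnode[r]'=40$, $\tnode[s]'=90$, $\mf[a]^\star=-2$, $\stpower[a]^\star=20\,\heatcap$ give $\tendprimeprime[a]=80$ and a mixed temperature $140/3\neq 40$ at $r$.

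Mirroring Lemma~\ref{lem:SolutionTransferD1} correctly points the other way: there $\tnode[r]^\star$ is the mixed temperature and differs from $\tendstar[R]$. The paper accordingly lets the temperature at $r$ move,
\[
  \tnode[r]''\define\frac{\mf[h(a)]'\,\tnode[r]'-\mf[a]''\,\tendprimeprime[a]}{\mf[h(a)]''},
\]
which satisfies the mixing at $r$ by construction, and uses it as the start temperature of $h(a)$. Then $\heatcap\,\mf[h(a)]''(\tnode[r]''-\tnode[s]')=\heatcap\,\mf[h(a)]'(\tnode[r]'-\tnode[s]')-\heatcap\,\mf[a]''(\tendprimeprime[a]-\tnode[s]')=\heatpower[h(a)]'-\stpower[a]^\star$ holds identically.

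Second, you insist on $\mf[a]''=\mf[a]^\star$, but the lemma prescribes only $\stpower[a]''$. Any storage flow of the same sign and no larger magnitude keeps $\stpower[a]''\,\mf[a]''\le 0$ and the flow bounds. The paper uses exactly this freedom, $\mf[a]''\define\min\{\mf[a]^\star,\mf[h(a)]'\}$, to guarantee $\mf[h(a)]''\ge 0$.

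Your main remedy for $\mf[a]^\star>\mf[h(a)]'$ is to accept $\mf[h(a)]''<0$, because the listed constraints of~\eqref{opt:decomp2} omit the sign condition. That defeats the purpose of the lemma. $\mathcal{D}'\cup\mathcal{D}''$ has to be feasible for~\eqref{opt:optModel} (Lemma~\ref{lem:feasibilityParallel}, Corollary~\ref{cor:DecompOptimal}), and that requires $\mf[h(a)]\ge 0$.

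Moreover, if $\mf[h(a)]''=0$, your definition of $\tendprimeprime[h(a)]$ through the mixing at $s$ breaks down. That mixing then forces $\tendprimeprime[a]=\tnode[s]'$, i.e., $\stpower[a]^\star=\heatpower[h(a)]'$, which need not hold. So, for $\mf[h(a)]'>0$, choose $0<\mf[a]''<\mf[h(a)]'$ strictly. Note that the paper's $\min$ lands precisely on this boundary when $\mf[a]^\star\ge\mf[h(a)]'$, and its argument there tacitly assumes $\stpower[a]^\star=\heatpower[h(a)]'$. With the strict choice, your discharging formulas go through with $\mf[a]''$ in place of $\mf[a]^\star$.
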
 

\begin{proof}
  We will construct a feasible solution $\mathcal{D}''$
  for~\eqref{opt:decomp2} that
  satisfies~\eqref{eq:SolutionTransferD2}. We again consider a fixed
  time step $t \in T$ and storage arc $a \in \estorage$. We then skip
  the index $t$ in the following and abbreviate $r \define r(a)$,
  $s \define s(a)$, and $R \define R(a)$.

  We will use the following conditions, which arise from~\eqref{opt:decomp2} and~\eqref{eq:SolutionTransferD2}:
  \begin{align}
    & \heatcap\, \abs{\mf[a]''} (\tstartprimeprime[a] - \tendprimeprime[a]) = \stpower[a]^\star,\label{eq:Transfer2:stpower}\\
    & \heatcap\, \mf[h(a)]'' (\tstartprimeprime[h(a)] - \tendprimeprime[h(a)]) = \heatpower[h(a)]' - \stpower[a]^\star,\label{eq:Transfer2:heatpower}\\
    & \mf[a]'' + \mf[h(a)]'' = \mf[h(a)]'.\label{eq:TransferD2:flow}
  \end{align}

  The solution $\mathcal{D}''$ can be defined as follows. Let
  $\mf[a]'' \define \min\{\mf[a]^\star,
  \smash{\mf[h(a)]'}\}$. Then~\eqref{eq:TransferD2:flow} implies
  $\mf[h(a)]'' \define \mf[h(a)]' - \mf[a]'' \geq 0$. The remaining
  values depend on the flow direction of $\mf[a]''$.

  If $\mf[a]'' > 0$ and $\mf[h(a)]'' > 0$ (see
  Figure~\ref{fig:tmix_discharge}), we define:
  \begin{equation}\label{eq:Transfer2:positive}
    \begin{aligned}
      & \tnode[r]'' = \tstartprimeprime[a] = \tstartprimeprime[h(a)] \define \tnode[r]',\quad
      \tnode[s]'' \define \tnode[s]',\\
      & \tendprimeprime[a] \define \tstartprimeprime[a] - \frac{1}{\heatcap\, \mf[a]'' } \stpower[a]^\star,\quad
      \tendprimeprime[h(a)] \define \tstartprimeprime[h(a)] - \frac{1}{\heatcap\, \mf[h(a)]''} (\heatpower[h(a)]' - \stpower[a]^\star).
    \end{aligned}
  \end{equation}
  Note that the definitions should be performed in the given
  order. Then by definition,
  \eqref{eq:Transfer2:stpower} and~\eqref{eq:Transfer2:heatpower} are
  satisfied.
  
  If $\mf[a]'' < 0$ and $\mf[h(a)]'' > 0$ (see
  Figure~\ref{fig:tmix_charge}), we define:
  \begin{equation}\label{eq:Transfer2:negative}
    \begin{aligned}
      & \tnode[s]'' = \tstartprimeprime[a] = \tendprimeprime[h(a)] \define \tnode[s]',\quad
      \tendprimeprime[a] \define \tstartprimeprime[a] + \frac{1}{\heatcap\, \mf[a]'' } \stpower[a]^\star,\\
      & \tstartprimeprime[h(a)] \define \tendprimeprime[h(a)] + \frac{1}{\heatcap\, \mf[h(a)]''} (\heatpower[h(a)]' - \stpower[a]^\star),\quad
        \tnode[r]'' \define \frac{\mf[h(a)]' \tnode[r]' - \mf[a]'' \tendprimeprime[a]}{\mf[h(a)]''}.
    \end{aligned}
  \end{equation}
  Again, \eqref{eq:Transfer2:stpower} and \eqref{eq:Transfer2:heatpower} are satisfied.

  If $\mf[a]'' = 0$ (i.e., $\mf[a]^\star = 0$), we have
  $\stpower[a]^\star = 0$. In this case, we can use all values from
  $\mathcal{D}'$ ($\tstartprimeprime[a]$ and $\tendprimeprime[a]$ are
  arbitrary) and be feasible for~\eqref{opt:decomp2}.

  If $\smash{\mf[h(a)]''} = 0$, we have
  $\mf[a]'' = \smash{\mf[h(a)]'} \geq 0$. We can assume that
  $\mf[a]'' > 0$, because $\mf[a]'' = 0$ has already been
  discussed. We then use the definitions
  in~\eqref{eq:Transfer2:positive}, except that
  $\tendprimeprime[a] \define \tnode[s]'$ (note that
  $\smash{\tstartprimeprime[h(a)]}$ and $\tendprimeprime[h(a)]$ are
  arbitrary). Then~\eqref{eq:Transfer2:stpower} becomes, using
  feasibility of $\mathcal{D}'$:
  \[
    \stpower[a]^\star = \heatcap\, \abs{\mf[a]''} (\tstartprimeprime[a] -
    \tendprimeprime[a]) = \heatcap\, \abs{\mf[a]''} (\tnode[r]' -
    \tnode[s]') = \heatcap\, \abs{\mf[h(a)]'} (\tstartprime[a] -
    \tendprime[a]) = \heatpower[h(a)]'.
  \]
  Thus, $\stpower[a]^\star = \smash{\heatpower[h(a)]'}$
  and~\eqref{eq:Transfer2:heatpower} is satisfied.  The same holds for
  all other constraints.  This concludes the definition of the solution
  $\mathcal{D}''$.

  We now claim that the produced solution is feasible
  for~\eqref{opt:decomp2}.  First note that by definition, the signs of
  $\mf[a]''$ and $\mf[a]^\star$ are always equal, since $\mf[h(a)]'\ge 0$, which implies that
  $\smash{\stpower[a]''\, \mf[a]'' \leq 0}$ holds.

  We next consider temperature mixing. If $\mf[a]'' > 0$, mixing
  happens at node $s$:
  \begin{align*}
    \tnode[s]'' & = \frac{\mf[a]'' \tendprimeprime[a] + \mf[h(a)]'' \tendprimeprime[h(a)]}{\mf[a]'' + \mf[h(a)]''}\\
    & = \frac{(\mf[a]'' \tnode[r]'' - \frac{1}{\heatcap} \stpower[a]^\star) + (\mf[h(a)]''\tnode[r]'' - \frac{1}{\heatcap} (\heatpower[h(a)]' - \stpower[a]^\star))}{\mf[h(a)]'}\\
    & = \frac{(\mf[a]'' + \mf[h(a)]'') \tnode[r]'' - \frac{1}{\heatcap} \heatpower[h(a)]'}{\mf[h(a)]'}\\
    & = \frac{\mf[h(a)]' \tnode[r]' - \frac{1}{\heatcap} \heatpower[h(a)]'}{\mf[h(a)]'},
  \end{align*}
  which is equivalent to
  \[
    \mf[h(a)]' (\tnode[s]' - \tnode[r]') = - \tfrac{1}{\heatcap} \heatpower[h(a)]',
  \]
  which holds, because $\mathcal{D}'$ is feasible.

  Similarly, if $\mf[a]'' < 0$, temperature mixing occurs at $r$,
  where flow conservation implies that
  $\mf[a]'' + \mf[h(a)]'' - \mf[R]'' = 0$, i.e.,
  $\mf[R]'' - \mf[a]'' = \mf[h(a)]''$. Because we do not change the
  flows and temperatures on arc $R$, we get
  $\mf[R]'' = \mf[R]' = \mf[h(a)]'$ and
  $\tendprimeprime[R] = \tendprime[R] = \tnode[r]'$. This yields:
  \[
    \tnode[r]'' = \frac{\mf[R]'' \tendprimeprime[R] - \mf[a]'' \tendprimeprime[a]}{\mf[R]'' - \mf[a]''}
    = \frac{\mf[h(a)]' \tnode[r]' - \mf[a]'' \tendprimeprime[a]}{\mf[h(a)]''},
  \]
  which is true by definition~\eqref{eq:Transfer2:negative}.

  Furthermore, the following constraint of~\eqref{opt:decomp2} is
  directly implied by~\eqref{eq:SolutionTransferD2} and
  Assumption~\ref{ass:Storage}, implying
  $\eleccost[a] = \eleccost[h(a)]$:
  \begin{align*}
    \eleccost[a] (\pumppower[a]'' + \pumppower[h(a)]'') - \heatcost[h(a)] (\stpower[a]'' + \heatpower[h(a)]'')
    = \eleccost[h(a)]\, \pumppower[h(a)]' - \heatcost[h(a)]\, \heatpower[h(a)]'.
  \end{align*}
  Since $\stpower[a]'' = \stpower[a]^\star$ and
  $\stpower[a]^\star$ is feasible for~\eqref{opt:optModel}, we obtain for
  all $t \in T$:
  \begin{align*}
    0 \leq \stcap{a} + \sum_{\ell=1}^t \stpower[a,\ell]'' \leq \stcap{a}.
  \end{align*}
  For the electric power, we get
  \begin{align*}
    \pumppower[a]'' + \pumppower[h(a)]''
    = \pumppower[h(a)]' = \frac{1}{\pumpeff[] \rho} (\pr[s]' - \pr[r]') \, \mf[h(a)]'
    = \frac{1}{\pumpeff[] \rho} (\pr[s]' - \pr[r]') \, (\mf[h(a)]'' + \mf[a]'').
  \end{align*}
  In total, this proves that $\mathcal{D}''$ is a feasible solution for~\eqref{opt:decomp2}.
\end{proof}

\begin{lemma}\label{lem:DecompOptimal}
  Let Assumption~\ref{ass:Storage} hold and let $\mathcal{D}^\star$ be a
  feasible solution for~\eqref{opt:optModel}. Let $\mathcal{D}'$ be an
  optimal solution of~\eqref{opt:decomp1}. If this solution is used as
  input, there exists a solution $\mathcal{D}''$ of~\eqref{opt:decomp2},
  whose objective value is at least as good as $\mathcal{D}^\star$.
\end{lemma}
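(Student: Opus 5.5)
The plan is to chain Lemma~\ref{lem:SolutionTransferD1}, the optimality of $\mathcal{D}'$, and Lemma~\ref{lem:SolutionTransferD2}, and then compare objective values. First apply Lemma~\ref{lem:SolutionTransferD1} to $\mathcal{D}^\star$. This gives a feasible solution $\widetilde{\mathcal{D}}$ of~\eqref{opt:decomp1} with $\pumppower[h(a),t]^{\sim} = \pumppower[a,t]^\star + \pumppower[h(a),t]^\star$ and $\heatpower[h(a),t]^{\sim} = \stpower[a,t]^\star + \heatpower[h(a),t]^\star$. By the remark following that lemma, which uses Assumption~\ref{ass:Storage}~\ref{ass:Storage:cost}, its objective value equals
\[
  f(\mathcal{D}^\star) - \sum_{t\in T}\sum_{a\in\estorage} \heatcost[h(a),t]\,\stpower[a,t]^\star,
\]
where $f$ denotes the objective of~\eqref{opt:optModel}. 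Since $\mathcal{D}'$ is optimal for~\eqref{opt:decomp1}, its objective value $f'(\mathcal{D}')$ is at most this quantity.

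Next apply Lemma~\ref{lem:SolutionTransferD2} with input $\mathcal{D}'$ to obtain a feasible $\mathcal{D}''$ of~\eqref{opt:decomp2} satisfying~\eqref{eq:SolutionTransferD2}. We then compute the objective of $\mathcal{D}''$ heating by heating. Assumption~\ref{ass:Storage}~\ref{ass:Storage:pair} lets us pair every heating arc with exactly one storage $a$, so the objective splits as
\[
  \sum_{t}\sum_{a\in\estorage}\Big(\eleccost[a,t](\pumppower[a,t]''+\pumppower[h(a),t]'') - \heatcost[h(a),t]\,\heatpower[h(a),t]''\Big).
\]
Substituting~\eqref{eq:SolutionTransferD2} gives, for each pair,
\[
  \eleccost[h(a),t]\,\pumppower[h(a),t]' - \heatcost[h(a),t]\,\heatpower[h(a),t]' + \heatcost[h(a),t]\,\stpower[a,t]^\star .
\]
Summing over all pairs yields $f'(\mathcal{D}') + \sum_{t,a}\heatcost[h(a),t]\,\stpower[a,t]^\star$. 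Combining this with the bound on $f'(\mathcal{D}')$ from the first step, the storage net cost terms cancel, and the objective of $\mathcal{D}''$ is at most $f(\mathcal{D}^\star)$.

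The main point needing care is the bookkeeping. The objective of~\eqref{opt:decomp2} includes both the heating terms and the storage pump terms, and only the sums $\pumppower[a,t]''+\pumppower[h(a),t]''$ and $\heatpower[h(a),t]''$ are pinned down by~\eqref{eq:SolutionTransferD2}. Two facts make this work: equal pump costs (Assumption~\ref{ass:Storage}~\ref{ass:Storage:cost}), so the individual split of pump power does not matter; and the storage heat $\stpower[]$ itself not appearing in the objective. Otherwise the argument is a direct telescoping of the two transfer lemmas. If one wants the combined solution itself, Lemma~\ref{lem:feasibilityParallel} then shows that $\mathcal{D}'\cup\mathcal{D}''$ is feasible for~\eqref{opt:optModel} with this objective value.
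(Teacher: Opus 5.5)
Your argument is correct and is essentially the paper's proof. The paper splits into the cases \eqref{eq:DecompCostLess} and \eqref{eq:DecompCostGreatereq}: it rules out the first by exhibiting the solution from Lemma~\ref{lem:SolutionTransferD1} as a strictly better solution of~\eqref{opt:decomp1}, and in the second it carries out exactly your objective computation via Lemma~\ref{lem:SolutionTransferD2}. You instead apply the optimality of $\mathcal{D}'$ directly to obtain \eqref{eq:DecompCostGreatereq}, which removes the case distinction without changing the substance.
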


\begin{proof}
  Consider the given feasible solution $\mathcal{D}^\star$ with values
  $\stpower[a,t]^\star$, $\smash{\heatpower[h(a),t]^\star}$,
  $\pumppower[a,t]^\star$, and $\smash{\pumppower[h(a),t]^\star}$ and
  denote by $\heatpower[h(a),t]'$ and $\pumppower[h(a),t]'$ the components
  of the optimal solution of~\eqref{opt:decomp1} for all $t \in T$ and
  $a \in \estorage$.
  
  One of the following two cases occurs.
  \begin{align}
     \sum_{t \in T} \sum_{a \in \estorage}\eleccost[a,t] (\pumppower[a,t]^\star + \pumppower[h(a),t]^\star) - \heatcost[h(a),t] (\stpower[a,t]^\star + \heatpower[h(a),t]^\star) 
    < \sum_{t \in T} \sum_{a \in \estorage}\eleccost[h(a),t]\, \pumppower[h(a),t]' - \heatcost[h(a),t]\, \heatpower[h(a),t]',\label{eq:DecompCostLess}
   \end{align}
   or
   \begin{align}
     \sum_{t \in T} \sum_{a \in \estorage} \eleccost[a,t] (\pumppower[a,t]^\star + \pumppower[h(a),t]^\star) - \heatcost[h(a),t] (\stpower[a,t]^\star + \heatpower[h(a),t]^\star)
    \ge \sum_{t \in T} \sum_{a \in \estorage}\eleccost[h(a),t]\, \pumppower[h(a),t]' - \heatcost[h(a),t]\,
    \heatpower[h(a),t]'.\label{eq:DecompCostGreatereq}
  \end{align}

  Consider the first case, i.e., \eqref{eq:DecompCostLess} holds.  By
  Lemma~\ref{lem:SolutionTransferD1} there exists a feasible solution
  of~\eqref{opt:decomp1} with values $\heatpower[h(a),t]^\diamond$ and
  $\pumppower[h(a),t]^\diamond$ such that
  $\pumppower[h(a),t]^\diamond = \pumppower[a,t]^\star +
  \pumppower[h(a),t]^\star$ and
  $\heatpower[h(a),t]^\diamond = \stpower[a,t]^\star +
  \heatpower[h(a),t]^\star$ for all $a \in \estorage, t \in T$.  By
  Assumption~\ref{ass:Storage}~\ref{ass:Storage:parallel}, \ref{ass:Storage:pair} and~\ref{ass:Storage:cost},
  it follows that
  \begin{align*}
    \sum_{t \in T} \sum_{a \in \estorage} \eleccost[h(a),t]\, \pumppower[h(a),t]^\diamond - \heatcost[h(a),t]\, \heatpower[h(a),t]^\diamond
    & = \sum_{t \in T} \sum_{a \in \estorage} \eleccost[a,t]\, (\pumppower[a,t]^\star + \pumppower[h(a),t]^\star) - \heatcost[h(a),t] (\stpower[a,t]^\star + \heatpower[h(a),t]^\star)\\
    & < \sum_{t \in T} \sum_{a \in \estorage} \eleccost[h(a),t]\, \pumppower[h(a),t]' - \heatcost[h(a),t]\, \heatpower[h(a),t]'.
  \end{align*}
  Thus, $\mathcal{D}'$ cannot be optimal -- a contradiction.

  It remains to treat the case~\eqref{eq:DecompCostGreatereq}. By
  Lemma~\ref{lem:SolutionTransferD2}, we can choose $\mathcal{D}''$ such that
  \[
    \stpower[a,t]'' = \stpower[a,t]^\star,\quad
    \heatpower[h(a),t]'' = \heatpower[h(a),t]' - \stpower[a,t]^\star,\quad
    \pumppower[a,t]'' + \pumppower[h(a),t]'' = \pumppower[h(a)]'\quad \forall a \in \estorage, t \in T.
  \]
  Then, by Assumption~\ref{ass:Storage}~\ref{ass:Storage:cost} the
  objective of $\mathcal{D}''$ satisfies:
  \begin{align*}
    & \sum_{t \in T} \sum_{a \in \estorage} \eleccost[h(a),t]\, \pumppower[h(a),t]'' - \heatcost[h(a),t]\, \heatpower[h(a),t]'' + \eleccost[a,t]\, \pumppower[a,t]''\\
    = & \sum_{t \in T} \sum_{a \in \estorage} \eleccost[h(a),t]\, (\pumppower[h(a),t]'' + \pumppower[a,t]'') - \heatcost[h(a),t]\, \heatpower[h(a),t]''\\
    = & \sum_{t \in T} \sum_{a \in \estorage} \eleccost[h(a),t]\, \pumppower[h(a),t]' - \heatcost[h(a),t]\, \heatpower[h(a),t]' + \heatcost[h(a),t]\, \stpower[a,t]^\star\\
    \overset{\eqref{eq:DecompCostGreatereq}}{\le} & \sum_{t \in T} \sum_{a \in \estorage} \eleccost[h(a),t]\, (\pumppower[a,t]^\star + \pumppower[h(a),t]^\star) - \heatcost[h(a),t]\, (\stpower[a,t]^\star + \heatpower[h(a),t]^\star) + \heatcost[h(a),t]\, \stpower[a,t]^\star\\
    = & \sum_{t \in T} \sum_{a \in \estorage} \eleccost[h(a),t]\, (\pumppower[a,t]^\star + \pumppower[h(a),t]^\star) - \heatcost[h(a),t]\, \heatpower[h(a),t]^\star.
  \end{align*}
  This means that the objective of $\mathcal{D}''$ is at least as small as the one of $\mathcal{D}^\star$.
\end{proof}

\begin{corollary}\label{cor:DecompOptimal}
  Let Assumption~\ref{ass:Storage} hold and let $\mathcal{D}^\star$ be a
  feasible solution for~\eqref{opt:optModel}. Let $\mathcal{D}'$ be an
  optimal solution of~\eqref{opt:decomp1} and $\mathcal{D}''$ be an
  optimal solution of~\eqref{opt:decomp2}, using $\mathcal{D}'$ as
  input. Then $\mathcal{D}' \cup \mathcal{D}''$ (in the sense of
  Lemma~\ref{lem:feasibilityParallel}) is an optimal solution
  for~\eqref{opt:optModel}.
\end{corollary}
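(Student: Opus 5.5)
The plan is to combine Lemma~\ref{lem:feasibilityParallel} with Lemma~\ref{lem:DecompOptimal}. Everything needed is already contained in these lemmas; the corollary should follow from short bookkeeping of objective values.

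\emph{Feasibility.} By Lemma~\ref{lem:feasibilityParallel}, the combined solution $\mathcal{D}' \cup \mathcal{D}''$, with the values of $\mathcal{D}''$ overwriting those of $\mathcal{D}'$, is feasible for~\eqref{opt:optModel}.

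\emph{Objective value.} I would show that the objective of~\eqref{opt:optModel} at $\mathcal{D}' \cup \mathcal{D}''$ equals the objective of~\eqref{opt:decomp2} at $\mathcal{D}''$. Two points need checking.
\begin{itemize}
\item The heating arcs are the only variables overwritten. By Assumption~\ref{ass:Storage}~\ref{ass:Storage:pair}, every heating arc is $h(a)$ for exactly one storage $a$.
\item Hence the sum over $\esupplier$ in~\eqref{opt:optModel} is exactly the sum over $h(a)$, $a \in \estorage$, that appears in~\eqref{opt:decomp2}.
\end{itemize}
The storage pump terms $\eleccost[a,t]\,\pumppower[a,t]$ also appear identically in both objectives. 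The two objectives therefore coincide on these values.

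\emph{Optimality.} Let $\mathcal{D}^\star$ be an arbitrary feasible solution of~\eqref{opt:optModel}. By Lemma~\ref{lem:DecompOptimal}, using the optimal $\mathcal{D}'$ as input, there is a feasible $\hat{\mathcal{D}}$ of~\eqref{opt:decomp2} whose objective is at most that of $\mathcal{D}^\star$. Since $\mathcal{D}''$ is optimal for~\eqref{opt:decomp2}, its objective is at most that of $\hat{\mathcal{D}}$. Chaining these inequalities with the objective identity above gives
\[
\text{obj}(\mathcal{D}'\cup\mathcal{D}'') \le \text{obj}(\mathcal{D}^\star).
\]
Because $\mathcal{D}^\star$ was arbitrary, $\mathcal{D}'\cup\mathcal{D}''$ is optimal. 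The hypothesis that some $\mathcal{D}^\star$ exists only ensures that~\eqref{opt:optModel} is feasible, so that optimality is meaningful.

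\emph{Main obstacle.} The delicate step is the objective identification. The objective of~\eqref{opt:decomp2} is written in full, with sums over $\esupplier$ and over $\estorage$, so I will check carefully that it is literally the objective of~\eqref{opt:optModel} restricted to variables that $\mathcal{D}''$ determines. The check needs two facts: every heating arc is paired with a storage, by Assumption~\ref{ass:Storage}~\ref{ass:Storage:pair}, and no other cost variables change. Given these, the identity is immediate.
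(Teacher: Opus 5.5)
Your proposal is correct and follows the paper's proof. Feasibility comes from Lemma~\ref{lem:feasibilityParallel}, and optimality comes from combining Lemma~\ref{lem:DecompOptimal} with the optimality of $\mathcal{D}''$ for~\eqref{opt:decomp2}; you also check explicitly that the objective of~\eqref{opt:optModel} at $\mathcal{D}' \cup \mathcal{D}''$ equals the objective of~\eqref{opt:decomp2} at $\mathcal{D}''$ (using Assumption~\ref{ass:Storage}~\ref{ass:Storage:pair}), a step the paper leaves implicit.
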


\begin{proof}
  Lemma~\ref{lem:feasibilityParallel} shows that
  $\mathcal{D}' \cup \mathcal{D}''$ is feasible
  for~\eqref{opt:optModel}. Lemma~\ref{lem:DecompOptimal} shows that for
  every feasible solution of~\eqref{opt:optModel} there exists a feasible
  solution for Problem \eqref{opt:decomp2} with objective value at least as
  good. Therefore, $\mathcal{D}' \cup \mathcal{D}''$ is optimal
  for~\eqref{opt:optModel}.
\end{proof}

\begin{remark}
  If Assumption~\ref{ass:Storage} do not hold, then the decomposition
  approach might not work, i.e., \eqref{opt:decomp2} can be infeasible or
  the approach produces a suboptimal solution. However, it can often
  produce a heuristic solution.
\end{remark}

\subsection{Numerical Results for the Decomposition Approach}

\begin{figure}
  \centering
  \begin{tikzpicture}
    \node at (0,3) [circle,draw, very thick] (va){};
    \node at (5,3) [circle,draw, very thick] (vb){};
    \node at (0,0) [circle,draw, very thick] (ra){};
    \node at (5,0) [circle,draw, very thick] (rb){};
    \draw[->, very thick] (va)-- node[above]{}(vb);
    \draw[->, very thick] (rb)-- node[above]{}(ra);
    \draw[->, very thick] (ra) to [out=70,in=-70] node[right]{$a^S$}(va);
    \draw[->, very thick] (vb)-- node[left]{$a^D$}(rb);
    \draw[->, very thick] (ra) to [out=110,in=-110] node[left]{$a^C$}(va);
  \end{tikzpicture}
  \caption{Example network with a storage $a^C$, heating $a^S$, and demand $a^D$.}
  \label{fig:storage_example}
\end{figure}

In this section, we illustrate the decomposition on an example. Because the
solution of the full problem with storages, i.e.,
Problem~\eqref{opt:optModel}, takes a very long time, we use an extremely
simple network, see Figure~\ref{fig:storage_example}.

\begin{table}[tb]
  \centering
  \caption{Parameters for decomposition example.}
  \label{tab:decomp_example_param}
  \begin{tabular}{@{}lllr@{}}\toprule
    $t$ & heat cost $\heatcost[]$ [\euromwh] & electricity cost $\eleccost[]$ [\euromwh] & demand\\\midrule
    1 & 0.072     & 0.000072 & 80\\
    2 & 0.0599684 & 0.0000599684 & 48\\
    3 & 0.0612    & 0.0000612 & 60\\
    4 & 0.0571856 & 0.0000571856 & 40\\\bottomrule
  \end{tabular}
\end{table}

We consider four time steps, i.e., $T = \{1, \dots, 4\}$ with changing costs
and heat demand given in Table~\ref{tab:decomp_example_param}. The demand
values should roughly reflect changes within one day, i.e., the different
time steps imitate morning, noon, evening, and night. In the morning the
heat demand and also the costs are the highest, while at night the demand
and the costs are lowest. Further, we assume a maximal supplier heat of
\SI{300}{\kW} and a storage capacity of $100$. Recall that the storage is
fully loaded in the first time step.

\begin{table}[tb]
  \caption{Comparison of decomposition approach.}
  \label{tab:decomp_results}
  \centering
  \begin{tabular}{@{}lrrr@{}}
    \toprule 
    & Original~\eqref{opt:optModel} & \eqref{opt:decomp1} & \eqref{opt:decomp2} \\ \midrule
    Solving time [s] & 3600 & 0.26 & 0.10 \\
    Best objective  & \num{403.637} & \num{410.842} & \num{403.718} \\
    \bottomrule
  \end{tabular}
\end{table}

We solve the original problem \eqref{opt:optModel} and compare it with
first solving Problem~\eqref{opt:decomp1} and
then~\eqref{opt:decomp2}.  We set the time limit again to 3600
seconds.  Table~\ref{tab:decomp_results} shows the solving time and
the objective values of the three models. As expected, Problem
\eqref{opt:decomp1} and \eqref{opt:decomp2} are easy to solve and only
need a few seconds until the optimal solution is found. In contrast,
the original Problem \eqref{opt:optModel} runs into the time
limit. After \SI{10.95}{\second}, a feasible solution is found, but
the program was not able to produce a reasonable dual bound and thus
cannot prove optimality.

Considering the objective values one can see that the
value of~\eqref{opt:optModel} coincides with the values found by the
decomposition approach within reasonable accuracy. As expected, the value
of~\eqref{opt:decomp1} is worse, since the storage is not used.

\section{Conclusion}
\label{sec:Conclusion}

This paper has demonstrated that global optimization of district
heating networks for larger inter-meshed networks with several
suppliers is possible in practice. However, the larger the number of
cycles or suppliers, the more complicated a solution of the stationary
problems gets. For the case with several time steps, a decomposition
method was essential. These results suggest immediate open challenges
for the future: the solution of even larger problems and instances
with storages that do not meet the assumptions for the decomposition
approach. An evaluation for the usage of the decomposition method as a
heuristic would also be interesting.

\section*{Acknowledgements}

Partially supported by the German Federal Ministry for Economic Affairs and Climate Action (BMWK) under project number 03EN3012A.

\begin{small}
   \bibliographystyle{IEEEtranSmod}
   \bibliography{heatnetpaper.bib}
\end{small}

\appendix

\section{Implemented Model}
\label{sec:ImplementedModel}

\begin{align*}
  \min\quad & \sum_{a \in \esupplier} (\pumppower[a] - \heatpower[a]), \\
  \text{s.t.} & \sum_{a \in \delta^{+}(v)} \mf[a] - \sum_{a \in \delta^-(v)} \mf[a] = 0 && \forall\, v \in \nodes,\\
  & \sum_{a \in \delta^-(v)} \mfp[a]\, \tnode[v] - \sum_{a \in \delta^+(v)} \mfn[a]\, \tnode[v]\nonumber\\
  & \quad = \sum_{a \in \delta^-(v)} \mfp[a]\, \tend[a] - \sum_{a \in \delta^+(v)} \mfn[a]\, \tend[a] && \forall\, v \in \nodes,\\
  & \pr[u] - \pr[v] = \zeta_{a} \, \mf[a] \abs{\mf[a]} + \rho g \, (\height[u] - \height[v]), && \forall\, a = (u,v) \in \epipe,\\
  & \pr[u] - \pr[v] \geq \plossmin && \forall\, a = (u,v) \in \edemand,\\
  & \pumppower[a] = \frac{1}{\pumpeff[a]\, \rho} \, (\pr[v] - \pr[u]) \, \mf[a] && \forall\, a = (u,v) \in \esupplier,\\
  & \mfp[a]\, \tend[a] - \mfn[a]\, \tend[a] \\
  & \quad = \mfp[a]\, \tnode[u] + \mfn[a]\, \tnode[v] - \tfrac{ \heatloss[a] \, \length[a]}{\heatcap} \, (\mfpos[a]\, \tnode[u] + \mfneg[a]\, \tnode[v] - \tamb) && \forall\, a = (u,v) \in \epipe,\\
  & \heatcap \, \mf[a] \, (\tnode[u] - \tfix) = \heatdemand[a] && \forall\, a = (u,v) \in \edemand,\\
  & \heatpower[a] - \heatcap \, \mf[a] \, (\tnode[u] - \tend[a]) = 0 && \forall\, a = (u,v) \in \esupplier,\\
  & \mfpos[a] + \mfneg[a] = 1 && \forall\, a \in \epipe,\\
  & \pr[\bar{v}] = \pfix, && \\
  & -\mubound[a] \, \mfneg[a] \leq \mf[a] \leq \mubound[a] \, \mfpos[a] && \forall\, a \in \epipe,\\
  & \mfp[a] = \mf[a]\, \mfpos[a] && \forall\, a \in \edges\\
  & \mfn[a] = \mf[a]\, \mfneg[a] && \forall\, a \in \edges\\
  & \tnode[v] \geq \tamb && \forall\, v \in \nodes,\\
  & \tend[a] \geq \tamb && \forall\, a \in \epipe,\\
  & \tend[a] = \tfix && \forall a \in \edemand,\\
  & \tend[a] \leq \tubound{a} && \forall\, a \in \esupplier,\\
  & \pumppower[a] \geq 0 && \forall\, a \in \esupplier,\\
  & \powerbound[a] \leq \heatpower[a] \leq 0 && \forall\, a \in \esupplier,\\
  & \mf[a] \geq 0 && \forall\, a \in \edemand \cup \esupplier,\\
  & \mfpos[a],\; \mfneg[a] \in \{0,1\} && \forall\, a \in \epipe.
\end{align*}

\section{Implemented Model with Storages}
\label{sec:ImplementedModelSto}

\begin{align*}
  \min \quad &   \sum_{t \in T} \Bigg( \sum_{a \in \esupplier} (\eleccost[a,t]\, \pumppower[a,t] - \heatcost[a,t]\, \heatpower[a,t]) + \sum_{a \in \estorage} \eleccost[a,t]\, \pumppower[a,t] \Bigg)\\
  \text{s.t.} & \sum_{a \in \delta^{+}(v)} \mf[a,t] - \sum_{a \in \delta^-(v)} \mf[a,t] = 0 && \forall\, v \in \nodes, t \in T,\\
  & \sum_{a \in \delta^-(v)} \mfp[a,t]\, \tnode[v,t] - \sum_{a \in \delta^+(v)} \mfn[a,t]\, \tnode[v,t]\\
  & \quad = \sum_{a \in \delta^-(v)} \mfp[a,t]\, \tend[a,t] - \sum_{a \in \delta^+(v)} \mfn[a,t]\, \tend[a,t] && \forall\, v \in \nodes, t \in T,\\
  & \pr[u,t] - \pr[v,t] - \zeta_a \, \mf[a,t] \abs{\mf[a,t]} + \rho g \, (\height[u] - \height[v]) = 0 && \forall a \in \epipe, t \in T,\\
  & \pr[u,t] - \pr[v] \geq \plossmin && \forall\, a = (u,v) \in \edemand, t \in T,\\
  & \pumppower[a,t] = \frac{1}{\pumpeff[a]\, \rho} \, (\pr[v,t] - \pr[u,t]) \, \mf[a,t] && \forall\, a = (u,v) \in \esupplier \cup \estorage, t \in T,\\
  & \mfp[a,t]\, \tend[a,t] - \mfn[a,t]\, \tend[a,t] \\
  & \quad = \mfp[a,t]\, \tnode[u,t] + \mfn[a,t]\, \tnode[v,t] - \tfrac{ \heatloss[a] \, \length[a]}{\heatcap} \, (\mfpos[a,t]\, \tnode[u,t] + \mfneg[a,t]\, \tnode[v,t] - \tamb) && \forall\, a = (u,v) \in \epipe, t \in T,\\
  & \heatcap \, \mf[a,t] \, (\tnode[u,t] - \tfix) = \heatdemand[a,t] && \forall\, a = (u,v) \in \edemand, t \in T,\\
  & \heatpower[a,t] - \heatcap \, \mf[a,t] \, (\tnode[u,t] - \tend[a,t]) = 0 && \forall\, a = (u,v) \in \esupplier, t \in T,\\
  & \stpower[a,t] -\heatcap \abs{\mf[a,t]} (\tstart[a,t] - \tend[a,t] ) = 0, && \forall a \in \estorage, t \in T,\\
  & 0 \leq \stcap{a} + \sum_{\ell=1}^t \stpower[a,\ell] \leq \stcap{a} && \forall a \in \estorage, t \in T,\\
  & \tstart[a,t] - \tnode[a,t] \, \mfpos[a,t] - \tnode[a,t] \, \mfneg[a,t] = 0 && \forall a \in \estorage, t \in T,\\
  & \mfpos[a,t] + \mfneg[a,t] = 1 && \forall\, a \in \epipe \cup \estorage, t \in T,\\
  & \pr[\bar{v},t] = \pfix && \forall t \in T,\\
  & -\mubound[a,t] \, \mfneg[a,t] \leq \mf[a,t] \leq \mubound[a,t] \, \mfpos[a,t] && \forall\, a \in \epipe \cup \estorage, t \in T,\\
  & \mfp[a,t] = \mf[a,t]\, \mfpos[a,t] && \forall\, a \in \edges, t \in T,\\
  & \mfn[a,t] = \mf[a,t]\, \mfneg[a,t] && \forall\, a \in \edges, t \in T,\\
  & \tnode[v,t] \geq \tamb && \forall v \in \nodes, t \in T,\\
  & \tend[a,t] \geq \tamb && \forall a \in \edges, t \in T,\\
  & \tend[a,t] = \tfix && \forall a \in \edemand, t \in T,\\
  & \tend[a,t] \leq \tubound{a} && \forall\, a \in \esupplier, t \in T,\\
  & \pumppower[a,t] \geq 0 && \forall\, a \in \esupplier, t \in T,\\
  & \powerbound[a,t] \leq \heatpower[a] \leq 0 && \forall\, a \in \esupplier, t \in T,\\
  & \stpower[a,t] \, \mf[a,t] \leq 0 && \forall a \in \estorage, t \in T,\\
  & \mf[a,t] \geq 0 && \forall\, a \in \edemand \cup \esupplier, t \in T,\\
  & \mfpos[a,t],\; \mfneg[a,t] \in \{0,1\} && \forall\, a \in \epipe \cup \estorage, t \in T.\\
\end{align*}

\end{document}